\newtheorem{theorem}{Theorem}
\newtheorem{lemma}[theorem]{Lemma}
\newtheorem{definition}[theorem]{Definition}
\newtheorem{assumption}[theorem]{Assumption}
\newcommand{\expect}[1]{\mathbb{E}\left[  {#1} \right]}
\newcommand{\cexpect}[1]{\mathbb{E}\left[  {#1}
\Big|\mathcal{F}_{t_n}\right]}
\newcommand{\Dt}{{\Delta t}}
\newcommand{\Dtmin}{{\Delta t}_{\min}}
\newcommand{\Dtmax}{{\Delta t}_{\max}}
\newcommand{\Dtref}{{\Delta t}_{\text{ref}}}
\newcommand{\Xzero}{X_{\text{zero}}}
\newcommand{\DW}{\Delta W}
\newcommand{\Ito}{It\^o\xspace}
\newcommand{\Eonestep}{\mathrm{E}}
\newcommand{\Ey}[1]{E_Y({#1})}
\newcommand{\Cp}[1]{C_{#1}}
\newcommand{\be}{\begin{equation}}
\newcommand{\ee}{\end{equation}}
\begin{document}

\author{C\'onall Kelly}
\address{School of Mathematical Sciences, University College Cork, Ireland.}
\email{conall.kelly@ucc.ie}

\author{Gabriel J. Lord}
\address{
Department of Mathematics, IMAPP, Radboud University, Nijmegen, The Netherlands.}
\email{gabriel.lord@ru.nl}

\title{An adaptive splitting method for the Cox-Ingersoll-Ross process.}
\xdef\shorttitle{A splitting based method for CIR}

\date{\today}

%\begin{keyword}Cox-Ingersoll-Ross model; Reflecting boundary; Splitting method; Adaptive mesh; Strong convergence; Soft zero.
%\MSC[2020] 60H10, 60H35, 65C30, 91G30, 91G60
%\end{keyword}

\begin{abstract}
{We propose a new splitting method for strong numerical solution of the Cox-Ingersoll-Ross model. For this method, applied over both deterministic and adaptive random meshes, we prove a uniform moment bound and strong error results of order $1/4$ in $L_1$ and $L_2$ for the parameter regime $\kappa\theta>\sigma^2$. 
We then extend the new method to cover all parameter values by introducing a \emph{soft zero} region (where the deterministic flow determines the approximation) giving a hybrid type method to deal with the reflecting boundary. From numerical simulations we observe a rate of order $1$ when $\kappa\theta>\sigma^2$ rather than $1/4$. Asymptotically, for large noise, we observe that the rates of convergence decrease similarly to those of other schemes but that the proposed method  making use of adaptive timestepping displays smaller error constants. }
\end{abstract}

\keywords{Cox-Ingersoll-Ross model; Lamperti transform; Reflecting boundary; Splitting method; Adaptive mesh; Strong convergence; Soft zero.}

\maketitle

\section{Introduction}
We introduce a novel splitting method for the strong numerical solution of the Cox-Ingersoll-Ross (CIR) process, which typically arises as a model of stochastic volatility or for the pricing of interest rate derivatives in finance; see Cox et al.,~\cite{CoxIngersollRoss}. Similar equations arise in other application, for example  the modelling of changes in the membrane voltage of a neuron, see~\cite{Onofrio}. The CIR process is given by the \Ito-type stochastic differential equation (SDE) for $t\in[0,T]$
\begin{equation}\label{eq:CIR}
  dX(t) = \kappa\left(\theta - X(t) \right)dt + \sigma \sqrt{X(t)} dW(t),\quad \quad X(0)=X_0>0,
\end{equation}
where $W(t)$ is a Wiener process, and $\kappa$, $\theta$, and $\sigma$ are positive parameters.
Solutions of \eqref{eq:CIR} are almost surely (a.s.) non-negative. In general paths can achieve a value of zero but are reflected back into the positive half of the real line immediately. We propose a novel construct called the \emph{soft zero} to ensure dynamic consistency in the neighbourhood of this reflecting bounday. Further, when the Feller condition
\begin{equation}\label{eq:FellerCond}
2\kappa\theta>\sigma^2
\end{equation}
is satisfied solutions are a.s. positive.  

Although no analytic solution is available, $X(t)$ has 
a non-central chi-square distribution (conditional upon $X(s)$ for $0\leq s<t$), see \cite{Alfonsi2005,BroadieKaya2006,Glasserman,LordKoekkeokvanDijk2010}. 
For Monte Carlo estimation, exact sampling from this known conditional distribution is feasible but computationally inefficient and potentially restrictive if the Wiener process of \eqref{eq:CIR} is correlated, see Cozma \& Reisinger~\cite{Cozma}. 
As a consequence the numerical simulation of \eqref{eq:CIR} is an active topic of research, and techniques to handle the unbounded gradient of the diffusion coefficient near zero can be applied to more general equations. 

Two significant challenges for a numerical scheme for CIR are to preserve positivity and to prove convergence with as high a rate as possible over as large a parameter range as possible. Upper bounds on the order of strong convergence are most often shown to hold in a restricted parameter range; see Table \ref{tab:comparetheory}, which gives a summary of some of the key methods, the known rates of convergence and parameter ranges where the theoretical rates are valid. We now discuss two main approaches for the strong approximation of \eqref{eq:CIR} in the literature, the first is direct numerical simulation of \eqref{eq:CIR}, and the second is based on a Lamperti transformation.

For direct approximations of \eqref{eq:CIR} that preserve positivity of the numerical solution see, for example, the discussions in  \cite{Alfonsi2005,BD2004,HighamMao2005,Alfonsi2010,Alfonsi2013,BerkaouiBossyDiop2008,HefterHerzwurm2018}. The last of these represents a remarkable advance, showing that a broad class of Milstein-type methods over an equidistant mesh with a novel truncation in the neighbourhood of zero is convergent with rate across all parameter values. 

General results on the strong convergence of methods to approximate \eqref{eq:CIR} using equidistant evaluations of the Wiener process are in Hefter \& Jentzen~\cite{HefterJentzen2019}, with extension to a class of  path-dependent adaptive meshes in Hefter et al.,~\cite{HHMG2019}. For equidistant meshes Euler- and Milstein-type discretizations are included, and it is shown that such a method has a convergence order of at best $\delta/2$ where $0<\delta<2$ is the dimension of the squared Bessel process associated with CIR. 
In the general results of Hefter et al.,~\cite{HHMG2019}, when applied to \eqref{eq:CIR} with $\kappa\theta>2\sigma$ it is proved that adaptive algorithms are subject to the same upper and lower bounds on the rate of convergence as equidistant grids, though the error constant may be different. For larger relative values of $\sigma$ they state it is unclear whether adaptive algorithms can improve upon the optimal rate of convergence. In our work we examine adaptive discretizations of \eqref{eq:CIR} and observe the same rates of convergence with equidistant vs adaptive steps. Howevever, in numerical experiments adaptivity allows us to better capture the dynamics of the process and to reduce the error constant, hence improving efficiency.

In Section \ref{sec:numres} we compare our new splitting based method to several of these: the Milstein method of Hefter \& Herzwurm~\cite{HefterHerzwurm2018} and the fully truncated method proposed by Lord et al.,~\cite{LordKoekkeokvanDijk2010}. The latter method is widely used in practice and was shown in \cite{LordKoekkeokvanDijk2010} to be convergent in $L_1$ but without a rate (results of this type are common for these Euler variants, see for example Higham \& Mao~\cite{HighamMao2005}).
However recently Cozma \& Reisinger~\cite{Cozma} obtained strong order of convergence $1/2$ in $L_p$ under certain conditions. With $p=2$ this gives convergence with rate $1/2$ for $2\kappa\theta>3\sigma^2$. 

The second main approach is to apply a Lamperti transformation to \eqref{eq:CIR} and to numerically approximate the related process $Y=\sqrt{X}$. The SDE in $Y$ has additive noise but a drift coefficient with a singularity in the neighbourhood of zero (see \eqref{eq:Lamperti}).
We note that strong $L_2$ convergence for the transformed equation yields $L_1$ convergence for the original CIR process. 
A fully implicit Euler discretisation over a uniform mesh that preserves positivity of solutions was proposed in Alfonsi~\cite{Alfonsi2005} and shown to have uniformly bounded moments. A continuous time extension interpolating linearly between mesh points was shown to have strong $L_p$ order of convergence $1/2$ (up to a factor of $\sqrt{|\log(\Dt)|}$) in Dereich et al.,~\cite{DNS2012} when $2\kappa\theta>p\sigma^2$, a continuous-time variant based on the same implicit discretisation was shown to have strong $L_p$ convergence of order $1$ when $4\kappa\theta>3p\sigma^2$ in Alphonsi~\cite{Alfonsi2013}. In Chassagneux et al., \cite{CJM16} a variant which discretised the transformed SDE for $Y$ with an explicit projection method was shown to give strong convergence of between order $1/6$ and $1$ for $\kappa\theta>\sigma^2$ (see Table \ref{tab:comparetheory}). We consider in Section \ref{sec:numres} the implicit method of \cite{Alfonsi2005} and the projection method of \cite{CJM16}.

In this article, we construct and analyse a new positivity preserving numerical method for \eqref{eq:CIR} which is based on a splitting method applied to the Lamperti transform. This scheme falls outside the framework of Hefter \& Herzwurm~\cite{HefterHerzwurm2018} as it fails to satisfy their $L_p$-Lipschitz continuity requirement in the absence of an equivalent truncation strategy. As an example of a splitting strategy it merits an independent error analysis. Over a class of meshes that includes both uniform and adaptive meshes we prove a uniform moment bound for the numerical scheme, which leads to a strong $L_1$ convergence of order $\Dt^{1/4}$ for $\kappa\theta>\sigma^2$. 
These results together then allow us to prove strong $L_2$ convergence of the same order in that region. The parameter restriction arises from the need to control the first two inverse moments of the transformed SDE.

In common with other methods we observe in numerical experiments a far better rate of convergence for the proposed method than predicted by the analysis in the Feller regime. 
In fact, with increasing $\sigma$ we see higher rates up to when $\alpha:=(4\kappa\theta-\sigma^2)/8=0$.
The scheme can be used without modification for $\alpha \geq 0$.
For $\alpha<0$ we describe an extension of the adaptive version of the scheme in Section \ref{sec:alphaneg} that introduces the notion of a \emph{soft zero} region where the noise component is switched off and the unperturbed solution is solved exactly, giving a hybrid method. The splitting method is seen to be competitive across all values of $\alpha$ both in terms of the estimated rate of convergence and in that it consistently shows a small error constant. The {\em soft zero} approach means that for large time steps, by contrast with other methods, the solution is not simply projected to zero. We believe this to be a novel technique with  broad applicability.

The structure of the article is as follows. In Section~\ref{sec:prelim} we motivate the construction of the numerical scheme by describing the Lamperti transformed equation in variation of constants form, and set up the (potentially adaptive) mesh. In Section~\ref{sec:momBounds} we describe useful conditional moment and regularity bounds on the exact solution of the SDE \eqref{eq:CIR}. In Section \ref{sec:main} we present our three main results, all of which hold over a potentially adaptive mesh. First, we prove a uniform moment bound on a linear interpolant of the numerical scheme. Second, we prove an $L_2$-strong convergence result for the numerical discretisation of the Lamperti transformed scheme, which implies an $L_1$-strong convergence result for the untransformed scheme. This leads to the third main result, an $L_2$-strong convergence result for the untransformed scheme. In Section \ref{sec:alphaneg} we extend the scheme to the $\alpha<0$ regime and prove that numerical steps taken in the ``soft zero'' region satisfy an appropriate mean-square consistency bound. In Section \ref{sec:numres} we compare numerically our method to four methods, two from direct simulation of CIR and two derived from the Lamperti transformation.  

\begin{table}
\begin{tabular}{ |c|c|c|c| } 
 \hline
 Scheme &  Norm & \makecell{Parameter \\Range} & \makecell{Rate} 
 \\ 
 \hline \hline
 
 \makecell{Proposed Splitting \\Method} &  $L_1$ and $L_2$  & $\kappa\theta>\sigma^2$& $1/4$ \\ 
  \hline 
\makecell{Truncated Milstein\\ (Hefter \& Herzwurm\\ \cite{HefterHerzwurm2018})}& $L_p$ & no restriction & \makecell{$\frac{1}{2p}\wedge\frac{2\kappa\theta}{p\sigma^2}-\epsilon$\\
 }
\\ \hline
\makecell{Drift Implicit\\ Square-Root Euler\\ (Alphonsi~\cite{Alfonsi2013}) }  
& \makecell{$L_p$\\ $p\in[1,\frac{4\kappa\theta}{3\sigma^2})$ }& $\kappa\theta>(1\vee \frac{3}{4}p)\sigma^2$ & 1 \\ 
 \hline 
\makecell{Projected Euler\\ (Chassagneux et al.,\\ \cite{CJM16})} & $L_1$ & \makecell[c]{$\kappa\theta>\frac{5}{2}\sigma^2$ \\ $\kappa\theta>\frac{3}{2}\sigma^2$ \\ $\kappa\theta>\sigma^2$} & \makecell[c]{$1$\\$\frac{1}{2}$\\$(\frac{1}{6},\frac{1}{2}-\frac{\sigma^2}{2\kappa\theta+\sigma^2})$} \\ 
 \hline 
\makecell{Fully Truncated\\ (Cozma \& Reisinger\\ \cite{Cozma})} & 
  \makecell{$L_p$ \\ $p\in[2,\frac{2\kappa\theta}{\sigma^2} -1)$}& $\kappa\theta>\frac{3}{2}\sigma^2$ & 1/2\\ 
 \hline
\end{tabular}
\caption{A selection of results with non-logarithmic rates of convergence(see also \cite[Table~1]{BossyOlivero2018}. We compare numerical results for these methods in Section \ref{sec:numres}.}
\label{tab:comparetheory}
\end{table}

\section{Mathematical Preliminaries}\label{sec:prelim}
\subsection{The Cox-Ingersoll-Ross equation}
For $t\geq 0$, the integral equation of \eqref{eq:CIR} is given by
\begin{equation}
 X(t)  = X_0 + \kappa\int_0^t (\theta - X(s)) ds + \sigma \int_0^t \sqrt{X(s)} dW(s),
 \label{eq:CIRint}
 \end{equation}
which can be written in variation of constants form as
\begin{eqnarray}
X(t)&=& e^{-\kappa t }X_0 + \theta(1 - e^{-\kappa t}) + \sigma \int_0^t e^{-\kappa(t-s)}\sqrt{X(s)} dW(s).
     \label{eq:CIRVoC2}
\end{eqnarray}
If we suppose that Feller's condition \eqref{eq:FellerCond} holds
we can equivalently consider the SDE with additive noise yielded by a Lamperti transformation. Letting $Y(t)=\sqrt{X(t)}$ for all $t\in[0,T]$ and 
\begin{equation}\label{eq:alphabetagamma}
\alpha:=(4\kappa\theta - \sigma^2)/8, \quad \beta:=\kappa/2, \quad  \text{and}\quad \gamma:=\sigma/2, 
\end{equation}
we have, after an application of It\^o's formula, the SDE
\begin{equation}\label{eq:Lamperti}
dY(t) = (\alpha Y^{-1}(t) - \beta Y(t)) dt + \gamma dW(t),\quad t\geq 0.
\end{equation}
This in its turn may be written in variation of constants form as
\begin{equation}
\label{eq:LampVoC}
Y(t) = e^{-\beta(t-t_{n})} Y(t_n) + \int_{t_n}^t e^{-\beta(t-s)} \frac{\alpha}{Y(s)} ds + \gamma \int_{t_n}^t e^{-\beta(t-s)} dW(s).
\end{equation}
Note that Feller's condition as given in \eqref{eq:FellerCond} may be expressed as $\alpha>\sigma^2/8$ or equivalently $2\alpha>\gamma^2$.

We prove a uniform moment bound, as well as strong $L_2$-convergence with order at least $1/4$ of numerical approximations to both \eqref{eq:CIR} and to the transformed equation \eqref{eq:Lamperti} under the following assumption, which implies the Feller condition \eqref{eq:FellerCond}:
\begin{assumption}\label{as:2invMom}
Let $\alpha>3\sigma^2/8$, or equivalently
$\kappa\theta>\sigma^2.$
\end{assumption}

Numerically, we examine strong $L_2$-convergence using a deterministic (uniform) mesh under the following parameter set, which is a superset of that defined in Assumption \ref{as:2invMom}:
\begin{assumption}\label{as:alphaPos}
Let $\alpha\geq 0$, or equivalently
$4\kappa\theta\geq\sigma^2.$
\end{assumption}
Finally we numerically investigate strong $L_2$-convergence using a random adaptive mesh under the following complementary parameter set:
\begin{assumption}\label{as:alphaNeg}
Let $\alpha<0$, or equivalently
$4\kappa\theta<\sigma^2.$
\end{assumption}
Note that Feller's condition is necessary for Assumption \ref{as:2invMom}, and sufficient but not necessary for Assumption \ref{as:alphaPos}. If Assumption \ref{as:alphaNeg} holds, Feller's condition does not.

\subsection{The mesh}\label{sec:mesh}
Our method may be implemented on either a deterministic or adaptive (random) mesh.
Therefore we introduce here a generic nonuniform mesh with nodes that may be selected randomly subject to certain measurability requirements, with a view to proving strong convergence under a minimal set of constraints on the mesh.

Consider the mesh $\{t_0,t_1\ldots,t_N\}$ on the interval $[0,T]$, where $t_0=0$ and $t_N=T$. Points on the mesh are assumed to be distinct and may be selected in a way that may or may not be path dependent. For example the mesh may be deterministic (e.g. with uniform steps across trajectories) or random (e.g. in an adaptive manner by choosing $t_{n+1}$ based upon the observed value of a discretisation at $t_n$). In the latter case $N$ will be an $\mathbb{N}$-valued random variable, and in either case, we denote $\Dt_{n+1}:=t_{n+1}-t_n$ for all $n=0,\ldots,N-1$. 
\begin{definition}\label{def:fil}
Let $(\mathcal{F}_t)_{t\geq 0}$ be the natural filtration of $W(t)$. Suppose that each member of  $\{t_n:=\sum_{i=1}^{n}\Dt_{i}\}_{n\in\{0,\ldots,N\}}$, with $t_0=0$, is an $\mathcal{F}_t$-stopping time. That is to say $\{t_n\leq t\}\in\mathcal{F}_t$ for all $t\geq 0$.
If $\tau$ is any $\mathcal{F}_t$-stopping time $\tau$ then (see \cite{mao2006stochastic})
\[
\mathcal{F}_{\tau}:=\{B\in\mathcal{F}\,:\,B\cap\{\tau\leq t\}\in\mathcal{F}_t,\,\text{ for all }\,t\geq 0\}.
\]
This allows us to condition on $\mathcal{F}_{t_n}$ at any point on the time-set $\{t_n\}_{n\in\{0,\ldots,N\}}$.
\end{definition}
All our theoretical results apply on a mesh that satisfies the following: 
\begin{assumption}\label{as:DtMeas}
$\Dt_{n+1}$ is $\mathcal{F}_{t_n}$-measurable and $N<\infty$ a.s. Moreover there exists a deterministic constant $\Dtmax$ such that $\Dt_n\leq\Dtmax$ for all $n=1,\ldots,N$.
\end{assumption}
For our main convergence results we must additionally assume the following stronger conditions on the mesh:
\begin{assumption}\label{as:Nbounded}
For each fixed $\Dtmax$, there exists
\begin{itemize}
    \item[A.] a deterministic integer $N_{\max}<\infty$ such that $N\leq N_{\max}$ a.s; 
    \item[B.] a constant $T_{\max}$, independent of $\Dtmax$ and $N_{\max}$, such that 
    \begin{equation}\label{eq:TmaxDef}
        \Dtmax N_{\max}\leq T_{\max}. 
    \end{equation}
\end{itemize}
\end{assumption}
Part B holds if, for example, there exists a minimum stepsize $\Dtmin$ held in a fixed ratio to $\Dtmax$: see \cite{KL2018_1,KLM2020}. Whereas, Part A by itself, only requires that the number of steps taken over the entire interval of simulation has a deterministic upper bound. Moreover, both of Assumptions \ref{as:DtMeas} and \ref{as:Nbounded} are automatically satisfied if the mesh is constructed deterministically. 
Only Part A of Assumption \ref{as:Nbounded} is required to prove $L_2$-convergence directly for the transformed equation \eqref{eq:Lamperti}, whereas Parts A and B are both required to achieve our $L_2$-convergence result for \eqref{eq:CIR}. 

If an adaptive timestepping strategy is used the a.s. finiteness (Assumption \ref{as:DtMeas}) or boundedness (Assumption \ref{as:Nbounded}) of $N$ would need to be confirmed.

\begin{definition}\label{def:Nt}
For each $t\in [0,T]$, define the (potentially random) integer $N^{(t)}$ such that
\[
N^{(t)}:=\max\{n\in\mathbb{N}\setminus\{0\}\,:\,t_{n-1}<t\}.
\]
Set $N:=N^{(T)}$ and $t_N:=T$, so that $T$ is always the last point on the mesh.
\end{definition}
For any $t\in [0,T]$, $N^{(t)}$ is almost everywhere (a.e.) the index of the right endpoint of the step that contains $t$, and by construction and Assumption \ref{as:DtMeas}, we have $N^{(t)}\leq N<\infty$ a.s, or $N^{(t)}\leq N\leq N_{\max}<\infty$ a.s. if Assumption \ref{as:Nbounded} Part A additionally holds.

If the mesh is constructed adaptively, $\Delta W_{n+1}:=W(t_{n+1})-W(t_n)$ is a Wiener increment over a random interval the length of which depends on $X_n$, through which it depends on $\{W(s),\,s\in[0,t_n]\}$. Therefore $\Delta W_{n+1}$ will not be independent of $\mathcal{F}_{t_n}$; indeed it is not necessarily normally distributed. Since $\Dt_{n+1}$ is a bounded $\mathcal{F}_{t_n}$-stopping time and $\mathcal{F}_{t_n}$-measurable, then $W(t_{n+1})-W(t_n)$ is $\mathcal{F}_{t_n}$-conditionally normally distributed, by Doob's optional sampling theorem (see for example Shiryaev~\cite{Shiryaev96})
\begin{eqnarray*}
\mathbb{E}[W(t_{n+1})-W(t_n) |\mathcal{F}_{t_n}]&=&0,\quad a.s.;\\
\mathbb{E}[|W(t_{n+1})-W(t_n)|^2 |\mathcal{F}_{t_n}]&=&\Dt_{n+1},\quad a.s.
\end{eqnarray*}

\subsection{The splitting method}
We start by considering the transformed SDE \eqref{eq:Lamperti}. The main approximation over a single step from $t_j$ to $t_{j+1}$ is based on the exact solution of the ODE
$$\frac{dz(t)}{dt}  = \alpha z^{-1}(t),\quad t\in[t_j,t_{j+1}],$$
which,  when $z(t_j)$ is given, can be computed to be
$$z(t) = \sqrt{z(t_j)^2 + 2\alpha(t-t_j)},\quad t\in[t_j,t_{j+1}].$$
This we combine with the exponential integrator based approximation of the OU-type SDE 
\begin{equation*}
dx(t) = - \beta x(t) dt + \gamma dW(t),\quad t\in[t_j,t_{j+1}].
\end{equation*}
The method that results is equivalent to the Lie-Trotter composition of the exact flows of the following subequations
\begin{align*}
dY^{[1]}(t)&=\alpha\left(Y^{[1]}(t)\right)^{-1}dt,\quad Y^{[1]}(0)=Y_0^{[1]}\\
dY^{[2]}(t)&=\gamma dW(t),\quad Y^{[2]}(0)=Y_0^{[2]};\\
dY^{[3]}(t)&=-\beta Y^{[3]}(t)dt,\quad Y^{[3]}(0)=Y_0^{[3]}.
\end{align*}
%with exact flows given by
%\begin{eqnarray*}
%\varphi_t^{[1]}\left(Y_0^{[1]}\right)&=&e^{-\beta t} Y_0^{[1]};
%\varphi_t^{[2]}\left(Y_0^{[2]}\right)&=&
%\end{eqnarray*}
Thus we obtain the approximation $Y_n$ to $Y(t_n)$ of \eqref{eq:Lamperti}
\begin{equation}\label{eq:DiscLamperti}
Y_{n+1} = e^{-\beta \Dt_{n+1}}\left(\sqrt{(Y_n)^2+2\alpha\Dt_{n+1}}+\gamma\DW_{n+1}\right),
\end{equation}
where $\DW_{n+1}=W(t_{n+1})-W(t_n)$. Note that changing the order of the splitting results in a method where we can not readily control the timestep to preserve positivity.

Our analysis requires a continuous form of this intermediate scheme defined over $t\in[t_n,t_{n+1}]$ given by
\begin{equation}
\label{eq:CtsLamp}
\bar{Y}(t) = e^{-\beta(t-t_n)}\left(Y_n + \int_{t_n}^t \frac{\alpha}{\sqrt{(Y_n)^2+2\alpha(s-t_n)}} ds\right) + e^{-\beta(t-t_n)} \gamma \int_{t_n}^t dW(s).
\end{equation}
It is straightforward to confirm that $\bar{Y}(t_n) = Y_n$ and $\bar{Y}(t_{n+1}) = Y_{n+1}$.

The scheme for \eqref{eq:CIR} is then defined by $X_{n+1} := (Y_{n+1})^2$, so
\begin{equation}\label{eq:schemeSq}
X_{n+1} =e^{-2\beta \Dt_{n+1}}\left(\sqrt{X_n+2\alpha\Dt_{n+1}}+\gamma\DW_{n+1}\right)^2,
\end{equation}
or equivalently 
\begin{multline}
\label{eq:scheme1step}
X_{n+1} = e^{-\kappa \Dt_{n+1}}\left(X_n+2\alpha\Dt_{n+1}\right.\\ 
\left.+ \sigma \sqrt{X_n+2\alpha\Dt_{n+1}} \left(\DW_{n+1}\right) + \sigma^2 \frac{(\DW_{n+1})^2}{4} \right). 
\end{multline}
An immediate consequence of the construction is that values of the numerical solution $\{X_n\}_{n=0}^N$ are non-negative for any $\alpha>0$.

Our results easily extend to a Strang-like splitting which results in only one extra term that is of higher order, and is given by
\[
X_{n+1}=e^{-2\beta\Dt_{n+1}}\left(\sqrt{X_n+\alpha\Dt_{n+1}}+\gamma\DW_{n+1}\right)^2+\alpha\Dt_{n+1}.
\]
% 1/2 step
%
%
%
%\[
%X_{n+1}=\left(e^{-\beta\Dt_{n+1}}\sqrt{X_n+\alpha\Dt_{n+1}}+e^{-\beta \Dt_{n+1}}\gamma\DW_{n+1}\right)^2+\alpha\Dt_{n+1}.
%\]
We did not observe numerical evidence of significant advantage. Nonetheless there is scope for further investigation in this direction.

\subsection{Extension to $\alpha<0$: adaptivity and a soft zero}
\label{sec:alphaneg}
The scheme given by \eqref{eq:schemeSq} contains the expression $\sqrt{X_n+2\alpha\Dt_{n+1}}$. When $\alpha>0$ this term is real and strictly positive for all $\Dt_{n+1}>0$. To ensure that the square root is real and positive when $\alpha \leq 0$ we can adapt the time step $\Dt_{n+1}$ to impose $X_n+2\alpha\Dt_{n+1} >0$ by taking
\begin{equation}
\label{eq:adaptdt}
  \Dt_{n+1} = \min\left\{0.95\frac{X_n}{2|\alpha|},\Dtmax\right\}.
\end{equation}
However, it is insufficient to simply apply the scheme \eqref{eq:schemeSq} over the adaptive mesh \eqref{eq:adaptdt} for two reasons. First, since Feller's condition does not hold, the boundary at zero may be achieved in this parameter regime, and in this case adaptivity is insufficient to maintain strict positivity of $\Dt_{n+1}$. Second, our scheme is based on the transformed SDE \eqref{eq:Lamperti} which is only well defined if solutions cannot attain zero (i.e. if Feller's condition holds). Nonetheless the scheme itself is well defined if Feller's condition is violated, but we must in that case treat numerical solutions in a neighbourhood of zero carefully.

We are thus motivated to introduce a {\em soft zero} region $[0,\Xzero]$ for some $\Xzero>0$ so that, when $X_n\in[0,\Xzero]$, we approximate \eqref{eq:CIR} by the deterministic ODE on $t\in[t_n,s]$, given $u(t_n)$
$$ \frac{du}{dt} = \kappa(\theta-u), \quad \text{with solution} \quad u(t) = e^{-\kappa(t-t_n)}u(t_n)+ \theta\left(1-e^{-\kappa(t-t_n)} \right).$$
We construct $\Xzero$ by a rescaling of $u(t_n+\Dtmax)|_{u(t_n)=0}$: for any $\rho>1$ (we choose $\rho=2$ in our numerical experiments) define
\begin{equation*}
   \Xzero:=\rho^{-1}\theta(1-e^{-\kappa\Dtmax}) \leq \rho^{-1}2\kappa\theta \Dtmax.
\end{equation*}
Note that as $\Dtmax\to 0$, $\Xzero\to 0$. 
When $X_n<\Xzero$ we set 
\begin{equation}
\Dt_{n+1}=-\frac{1}{\kappa}\log\left(\frac{\Xzero-\theta}{X_n-\theta}\right)
\label{eq:dtsoftzero}
\end{equation}
so that $X_{n+1}=\Xzero$.

Thus, when the numerical solution enters the {\em soft zero} region from a step of the splitting method \eqref{eq:scheme1step}, we take a single step of length $\Dt_{n+1}$ computed according to \eqref{eq:dtsoftzero} after which we are guaranteed to have exited the {\em soft zero} region. The scheme then reverts to \eqref{eq:scheme1step} unless the numerical solution again drops below $\Xzero$. In this way we can preserve the drift dynamics of the underlying SDE when numerical solutions are close to zero, by contrast with a truncation or projection approach.

In Section \ref{sec:LEBSZ}, Lemma \ref{lem:softzeroLocalError}, we prove a local error estimate for the scheme when it operates in the \emph{soft zero} region. However, further work is required to prove global convergence. There is scope to extend the application of this type of \emph{soft zero} to preserve domain invariance for other SDEs in future work.

\section{Moment bounds for the continuous-time equation}\label{sec:momBounds}
The bounds in this subsection hold under either of Assumptions \ref{as:alphaPos} and \ref{as:alphaNeg}.
We start by providing $L_p$ bounds, conditional at $\mathcal{F}_{t_n}$. Lemma 2.1 in Bossy \& Diop \cite{BD2004} gives bounds of the form:
\begin{equation}
\label{eq:BossyDiopMomentBound}
\expect{\sup_{t\in[0,T]}X(t)^{2p}}\leq M_{2p}:=\Cp{1}(1+X(0)^{2p}),
\qquad p\geq 1,
\end{equation}
regardless of the parameter values. 
There is a natural extension to conditional moment bounds. For example we have a conditional bound following from the mean reverting property of solutions of \eqref{eq:CIR}.
\begin{lemma}\label{lem:MeanRevert}
Let $\left(X(t) \right)_{t \in [0,T]}$ be a solution of \eqref{eq:CIR}, let $t_n$ be a node on a (potentially random) mesh such that Assumption \ref{as:DtMeas} holds, and suppose that $t_n \leq t\leq T$. Then a.s.
\begin{eqnarray}
\cexpect{X(t)} &=& e^{-\kappa (t-t_n)} X(t_n) + \theta(1-e^{-\kappa (t-t_n)}) \label{eq:PREMeanRevert}\\
&\leq& X(t_n)+\theta.
\label{eq:MeanRevert}
\end{eqnarray}
\end{lemma}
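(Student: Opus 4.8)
The plan is to start from the variation of constants representation of \eqref{eq:CIR} with base point $t_n$ rather than $0$. Applying the integrating factor $e^{\kappa t}$ to the integral equation \eqref{eq:CIRint} (equivalently, running the derivation of \eqref{eq:CIRVoC2} from $t_n$ instead of $0$) yields, for $t\geq t_n$,
\begin{equation*}
X(t) = e^{-\kappa(t-t_n)}X(t_n) + \theta\left(1-e^{-\kappa(t-t_n)}\right) + \sigma\int_{t_n}^t e^{-\kappa(t-s)}\sqrt{X(s)}\,dW(s).
\end{equation*}
The first two terms are $\mathcal{F}_{t_n}$-measurable, so taking $\mathbb{E}[\,\cdot\,|\mathcal{F}_{t_n}]$ reduces the claim \eqref{eq:PREMeanRevert} to showing that the conditional expectation of the stochastic integral vanishes a.s.

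First I would establish that the process $N(u):=\sigma\int_0^u e^{\kappa s}\sqrt{X(s)}\,dW(s)$ is a genuine (not merely local) martingale on $[0,T]$. The integrand is adapted and, by the Bossy--Diop bound \eqref{eq:BossyDiopMomentBound} with $p=1$, we have $\mathbb{E}\int_0^T e^{2\kappa s}X(s)\,ds<\infty$, so the expected quadratic variation $\mathbb{E}\langle N\rangle_T$ is finite and $N$ is a square-integrable (hence uniformly integrable) martingale. Writing the stochastic integral above as $e^{-\kappa t}\bigl(N(t)-N(t_n)\bigr)$, the key step is to apply the optional sampling theorem at the bounded stopping time $t_n$ (Definition \ref{def:fil}, with $t_n\leq T$ under Assumption \ref{as:DtMeas}), giving $\mathbb{E}[N(t)\,|\,\mathcal{F}_{t_n}]=N(t_n)$ a.s.\ on $\{t_n\leq t\}$, so the conditional expectation of the stochastic integral is zero. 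This establishes \eqref{eq:PREMeanRevert}.

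The inequality \eqref{eq:MeanRevert} is then immediate: for $t\geq t_n$ we have $e^{-\kappa(t-t_n)}\in(0,1]$, and since $X(t_n)\geq 0$ a.s.\ we may bound $e^{-\kappa(t-t_n)}X(t_n)\leq X(t_n)$ and $\theta(1-e^{-\kappa(t-t_n)})\leq\theta$, whence the right-hand side of \eqref{eq:PREMeanRevert} is at most $X(t_n)+\theta$.

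The main obstacle is the second paragraph: because the conditioning time $t_n$ is a random stopping time rather than a deterministic node, the vanishing of the conditional expectation of the \Ito integral cannot be taken for granted and must be justified via optional sampling for a \emph{true} martingale, which is exactly where the global moment bound \eqref{eq:BossyDiopMomentBound} is needed. An alternative route, avoiding the variation of constants form, is to take $\mathbb{E}[\,\cdot\,|\,\mathcal{F}_{t_n}]$ directly in the integral equation from $t_n$ to $t$, interchange conditional expectation with the finite time integral by Fubini (again using the moment bound), and solve the resulting linear ODE $m'(t)=\kappa(\theta-m(t))$ with $m(t_n)=X(t_n)$ for $m(t):=\mathbb{E}[X(t)\,|\,\mathcal{F}_{t_n}]$; this reproduces \eqref{eq:PREMeanRevert} but still relies on the same martingale property to discard the stochastic term.
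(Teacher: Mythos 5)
Your proof is correct, but it is a genuinely different (and far more detailed) route than the paper's: the authors simply cite \cite{CoxIngersollRoss} for the conditional mean formula \eqref{eq:PREMeanRevert} and state that \eqref{eq:MeanRevert} follows. What you supply that the citation alone does not is precisely the point that matters in this paper's setting: the classical CIR mean formula is stated for deterministic conditioning times, whereas here $t_n$ may be a random $\mathcal{F}_t$-stopping time on an adaptive mesh. Your argument --- rewriting the stochastic integral as $e^{-\kappa t}\bigl(N(t)-N(t_n)\bigr)$ for the square-integrable martingale $N(u)=\sigma\int_0^u e^{\kappa s}\sqrt{X(s)}\,dW(s)$, verifying true-martingality via the moment bound \eqref{eq:BossyDiopMomentBound}, and then invoking Doob's optional sampling at the bounded stopping time $t_n$ --- is exactly the justification needed for that extension, and it mirrors the way the paper itself handles Wiener increments over random intervals in Section \ref{sec:mesh}. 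The derivation of the variation of constants form based at $t_n$, the measurability of $e^{-\kappa(t-t_n)}X(t_n)$ with respect to $\mathcal{F}_{t_n}$, and the elementary bound giving \eqref{eq:MeanRevert} from nonnegativity of $X(t_n)$ and $e^{-\kappa(t-t_n)}\in(0,1]$ are all sound. In short, the paper's proof buys brevity by deferring to the literature; yours buys a self-contained argument that is actually valid at stopping times, which is the case the lemma is used in.
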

\begin{proof}
Equation \eqref{eq:PREMeanRevert} is well-known and may be found in \cite{CoxIngersollRoss}. The inequality \eqref{eq:MeanRevert} follows.%Take the $\mathcal{F}_{t_n}$-conditional expectation on both sides of \eqref{eq:CIRVoC2}. 
\end{proof}

Furthermore we have the following: 
\begin{lemma}\label{lem:posMB}
Let $\left(X(t) \right)_{t \in [0,T]}$ be a solution of \eqref{eq:CIR}, and let $0 \leq t_n\leq T$. For any $X(0)>0$ and any $p>0$, there exist constants
$M_{1,p}<\infty$,
such that
\begin{equation}\label{eq:equCondMp1}
\mathbb{E} \left[ \sup_{u\in[t_n,T]}X(u)^{p} \biggl|\mathcal{F}_{t_n}\right]\leq M_{1,p}(1+X(t_n)^p),\quad a.s.
\end{equation}
Furthermore let $\left( Y(t) \right)_{t \in [0,T]}$ be a solution of \eqref{eq:Lamperti}, where Assumption \ref{as:2invMom} holds,
\begin{equation}\label{eq:equCondMp2}
\mathbb{E} \left[ \sup_{u\in[t_n,T]}Y(u)^{p} \biggl|\mathcal{F}_{t_n}\right]\leq M_{1,p}(1+Y(t_n)^p),\quad a.s.
\end{equation}
\end{lemma}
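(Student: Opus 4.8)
The plan is to deduce both conditional bounds from the unconditional moment bound \eqref{eq:BossyDiopMomentBound} of Bossy \& Diop by exploiting the strong Markov property of the CIR process together with the fact that each $t_n$ is an $\mathcal{F}_t$-stopping time (Definition \ref{def:fil}) with $t_n\leq T$. First I would use that, conditionally on $\mathcal{F}_{t_n}$, the shifted process $(X(t_n+s))_{s\geq 0}$ is again a CIR process started from the $\mathcal{F}_{t_n}$-measurable value $X(t_n)$ and driven by a Brownian motion independent of $\mathcal{F}_{t_n}$. Writing $\mathbb{E}_x$ for expectation under the law of \eqref{eq:CIR} started at $X(0)=x$, this identifies
\[
\mathbb{E}\left[\sup_{u\in[t_n,T]}X(u)^p\,\Big|\mathcal{F}_{t_n}\right]
= g\big(X(t_n),\,T-t_n\big),\qquad
g(x,\tau):=\mathbb{E}_x\left[\sup_{s\in[0,\tau]}X(s)^p\right],
\]
almost surely. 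Since $t_n\leq T$ we have $T-t_n\leq T$, and monotonicity of the supremum in the time interval gives $g(X(t_n),T-t_n)\leq g(X(t_n),T)$, so it suffices to bound $g(x,T)$ by an expression of the form $M_{1,p}(1+x^p)$.

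Next I would establish this bound on $g(x,T)$ for arbitrary $p>0$. The stated bound \eqref{eq:BossyDiopMomentBound} covers even powers $2q$ with $q\geq 1$; to reach a general $p>0$ I would pick an integer $q\geq 1$ with $2q\geq p$ and apply Jensen's inequality to the concave map $y\mapsto y^{p/(2q)}$, giving
\[
g(x,T)=\mathbb{E}_x\left[\Big(\sup_{s\in[0,T]}X(s)^{2q}\Big)^{p/(2q)}\right]
\leq \Big(\mathbb{E}_x\Big[\sup_{s\in[0,T]}X(s)^{2q}\Big]\Big)^{p/(2q)}
\leq \big(\Cp{1}(1+x^{2q})\big)^{p/(2q)}.
\]
The elementary inequality $(1+x^{2q})^{p/(2q)}\leq 2^{p/(2q)}(1+x^p)$, valid for $x\geq 0$, then yields $g(x,T)\leq M_{1,p}(1+x^p)$ with $M_{1,p}:=(2\Cp{1})^{p/(2q)}$. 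Substituting $x=X(t_n)$ completes \eqref{eq:equCondMp1}.

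Finally, the bound for the transformed process is immediate once the first part is in hand. Under Assumption \ref{as:2invMom} the Feller condition holds, so $Y=\sqrt{X}$ solves \eqref{eq:Lamperti} and in particular $Y(u)^p=X(u)^{p/2}$ pathwise. Applying \eqref{eq:equCondMp1} with $p/2$ in place of $p$ gives
\[
\mathbb{E}\left[\sup_{u\in[t_n,T]}Y(u)^p\,\Big|\mathcal{F}_{t_n}\right]
=\mathbb{E}\left[\sup_{u\in[t_n,T]}X(u)^{p/2}\,\Big|\mathcal{F}_{t_n}\right]
\leq M_{1,p/2}\big(1+X(t_n)^{p/2}\big)
=M_{1,p/2}\big(1+Y(t_n)^p\big),
\]
which is \eqref{eq:equCondMp2} after relabelling the constant. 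I expect the main obstacle to be the rigorous justification of the strong Markov / conditioning step: one must verify that $t_n$ is indeed a (bounded) stopping time at which the strong Markov property applies, guaranteed by Definition \ref{def:fil} and the strong well-posedness of \eqref{eq:CIR}, and that the affine-in-$x^{2q}$ form of the Bossy \& Diop constant survives the substitution $x\mapsto X(t_n)$ inside the conditional expectation. The remaining steps — the Jensen reduction to a general exponent and the pathwise identity $Y^p=X^{p/2}$ — are routine.
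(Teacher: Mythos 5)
Your argument is sound, but note that the paper does not prove this lemma at all: its ``proof'' is a citation to Lemma~3 of \cite{KLM2020}, so there is no in-paper argument to match against. What you supply is a self-contained derivation via (i) the strong Markov property of CIR at the bounded stopping time $t_n$, (ii) the unconditional Bossy--Diop bound \eqref{eq:BossyDiopMomentBound}, and (iii) Jensen's inequality to pass from even integer exponents to arbitrary $p>0$; each step is correct, and the reduction $Y^p=X^{p/2}$ for \eqref{eq:equCondMp2} under Assumption \ref{as:2invMom} is exactly right. Two small points. First, you correctly identify the only nontrivial technical step as the strong Markov/freezing argument; since the horizon $T-t_n$ is random ($\mathcal{F}_{t_n}$-measurable), you need either a monotone-class argument to justify $g(X(t_n),T-t_n)$, or, more cheaply, you can bound $\sup_{u\in[t_n,T]}X(u)^p\leq\sup_{u\in[t_n,t_n+T]}X(u)^p$ and work with the fixed horizon $T$, which removes the issue entirely (CIR solutions exist globally, so extending to $[0,2T]$ costs nothing). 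The strong Markov property itself is available because pathwise uniqueness holds for \eqref{eq:CIR} by the Yamada--Watanabe criterion. Second, the lemma uses a single constant $M_{1,p}$ for both \eqref{eq:equCondMp1} and \eqref{eq:equCondMp2}, whereas your argument produces $M_{1,p}$ and $M_{1,p/2}$ respectively; taking the maximum of the two (as you implicitly do by ``relabelling'') closes this cosmetic gap. With those clarifications your proof stands as a legitimate replacement for the external citation.
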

\begin{proof}
See Lemma 3 in \cite{KLM2020}.
\end{proof} 

We also require bounds on the first two inverse conditional moments of \eqref{eq:Lamperti}, see \cite{KLM2020} for proof.
\begin{lemma}\label{lem:fin}
Let $\left( Y(t) \right)_{t \in [0,T]}$ be a solution of \eqref{eq:Lamperti}, where Assumption \ref{as:2invMom} holds, and let $0 \leq t_n< s \leq T$. For any $Y(0)>0$, and 
for $p=1,2$, there exists $\Cp{2}(p,T)>0$ such that
\begin{equation} \label{eq:equM-2}
\mathbb{E} \left[ \frac{1}{Y(s)^{p}} \biggl| \mathcal{F}_{t_n} \right] \leq \frac{\Cp{2}(p,T)}{Y(t_n)^p}, \quad a.s.
\end{equation}
\end{lemma}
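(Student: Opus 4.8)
The plan is to read the bound directly off the dynamics of \eqref{eq:Lamperti} by applying \Ito's formula to the Lyapunov-type function $V(y)=y^{-p}$ and exploiting the dissipativity of the drift near the origin. With $V'(y)=-py^{-p-1}$ and $V''(y)=p(p+1)y^{-p-2}$, the generator of \eqref{eq:Lamperti} acting on $V$ is
\[
\mathcal{L}V(y) = -p\left(\alpha-\frac{\gamma^2(p+1)}{2}\right)y^{-p-2} + p\beta\,y^{-p}.
\]
The crucial structural point is that the coefficient of the singular term $y^{-p-2}$ is nonpositive exactly when $\alpha\geq\gamma^2(p+1)/2$; since $\gamma=\sigma/2$ this reads $\alpha\geq(p+1)\sigma^2/8$, which for $p=2$ is precisely Assumption \ref{as:2invMom} (and is implied by it for $p=1$). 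Under this hypothesis the singular term only helps and may be discarded, leaving the linear inequality $\mathcal{L}V(y)\leq p\beta V(y)$, which is exactly the ingredient needed for a Gr\"onwall estimate.

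To make this rigorous I would first localise, since $Y^{-p-1}$ need not be integrable near the origin a priori. For $m\in\mathbb{N}$ set $\tau_m:=\inf\{s>t_n:Y(s)\notin(1/m,m)\}$; on $[t_n,s\wedge\tau_m]$ the integrand $-p\gamma Y^{-p-1}$ of the stochastic term is bounded, so that integral is a genuine martingale whose $\mathcal{F}_{t_n}$-conditional expectation vanishes. Writing $\phi_m(s):=\cexpect{Y(s\wedge\tau_m)^{-p}}$ (finite, since $Y(s\wedge\tau_m)\geq 1/m$), \Ito's formula together with the discarded negative term gives, on using $Y(r)^{-p}\mathbbm{1}_{\{r<\tau_m\}}\leq Y(r\wedge\tau_m)^{-p}$ to bound the time integral,
\[
\phi_m(s)\leq \frac{1}{Y(t_n)^p} + p\beta\int_{t_n}^{s}\phi_m(r)\,dr,\quad a.s.
\]
Gr\"onwall's inequality then yields $\phi_m(s)\leq e^{p\beta(s-t_n)}Y(t_n)^{-p}$. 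Because Assumption \ref{as:2invMom} implies Feller's condition, solutions of \eqref{eq:Lamperti} a.s.\ never reach the origin, so $\tau_m\to\infty$ a.s.\ as $m\to\infty$; applying conditional Fatou to $Y(s\wedge\tau_m)^{-p}\to Y(s)^{-p}$ passes the bound to the limit,
\[
\cexpect{\frac{1}{Y(s)^{p}}}\leq e^{p\beta(s-t_n)}\frac{1}{Y(t_n)^{p}}\leq \frac{e^{p\kappa T/2}}{Y(t_n)^{p}},\quad a.s.,
\]
so one may take $\Cp{2}(p,T)=e^{p\kappa T/2}$.

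The step I expect to be the genuine obstacle is the passage $m\to\infty$ under the conditional expectation: one must confirm both that $\tau_m\to\infty$ a.s.\ (which rests on non-attainability of the boundary under Feller's condition and deserves a careful citation) and that Fatou's lemma applies to the conditional expectations here. A completely different, more computational route would bypass the SDE and instead use the known conditional law of $X(s)=Y(s)^2$, namely a scaled noncentral chi-square with $d=4\kappa\theta/\sigma^2$ degrees of freedom and noncentrality parameter $\lambda$ proportional to $X(t_n)$. Writing $Y(s)^{-p}=X(s)^{-p/2}$ and using the Poisson-mixture representation of the noncentral chi-square, finiteness of the inverse moment requires only $d>p$, i.e.\ $4\kappa\theta>p\sigma^2$ (implied by Feller's condition for $p\leq 2$), while the correct $Y(t_n)^{-p}$ scaling emerges from the large-$\lambda$ asymptotics of the inverse moment. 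I would nonetheless favour the \Ito\ argument above, as it is self-contained and transparently explains why the threshold in Assumption \ref{as:2invMom} is $\alpha>3\sigma^2/8$.
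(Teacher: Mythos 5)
Your argument is correct, and it is worth noting at the outset that the paper does not prove this lemma in-text at all: it simply points to \cite{KLM2020}. So you are supplying a self-contained proof where the paper supplies a citation, and your route -- It\^o's formula applied to $V(y)=y^{-p}$, observing that the generator satisfies $\mathcal{L}V\leq p\beta V$ precisely when $\alpha\geq(p+1)\sigma^2/8$, localising on $\{Y\in(1/m,m)\}$, Gr\"onwall, then conditional Fatou -- is the standard Lyapunov mechanism for such inverse-moment bounds and delivers the clean constant $\Cp{2}(p,T)=e^{p\kappa T/2}$. Your generator computation checks out, and your identification of the threshold is exactly right: for $p=2$ the sign condition on the $y^{-p-2}$ coefficient is $\alpha\geq 3\sigma^2/8$, which is why the paper's Assumption \ref{as:2invMom} (and the restriction to $p=1,2$) takes the form it does; the paper itself remarks that the parameter restriction ``arises from the need to control the first two inverse moments,'' and your proof makes that mechanism explicit. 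The two technical points you flag are indeed the only ones needing care, and both are standard: non-attainment of the origin under Feller's condition (implied by Assumption \ref{as:2invMom}) together with non-explosion gives $\tau_m>T$ for $m$ large a.s., and conditional Fatou applies since the integrands are nonnegative. One small point you should tidy in a final write-up: the conditional Gr\"onwall step requires the inequality for $\phi_m(s)$ to hold for all $s$ simultaneously on a common null-set complement, which follows by taking a version of $\phi_m$ continuous in $s$ (available since $Y(\cdot\wedge\tau_m)^{-p}\leq m^p$ is bounded and pathwise continuous) and first establishing the inequality on rational times. The alternative route via the noncentral chi-square law that you sketch would also work for existence of the inverse moments, but extracting the precise $Y(t_n)^{-p}$ scaling from the Poisson-mixture representation is more delicate than the It\^o argument, so your preference for the latter is well placed.
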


The following lemma characterises the conditional H\"older continuity of solutions of \eqref{eq:Lamperti}, and is a special case of Lemma 13 in \cite{KLM2020}.
\begin{lemma}\label{lem:adap}
Let $\left( Y(t) \right)_{t \in [0,T]}$ be a solution of \eqref{eq:Lamperti}, suppose that Assumption \ref{as:2invMom} holds, and let $\lbrace t_n \rbrace_{n \in \mathbb{N}}$ be a random mesh such that each $t_n$ is an $\mathcal{F}_t$-stopping time. Fix $n\in\mathbb{N}$ and suppose that $t_n \leq s \leq T$.
Then
\begin{equation}\label{eq:Yreg}
\mathbb{E} \left[ | Y(s)-Y(t_n)|^2 \big|\mathcal{F}_{t_n} \right] \leq 4\gamma^2 |s-t_n| + \bar L_{n} |s-t_n|^2, \ a.s,
\end{equation}
where
\begin{equation*}
\bar L_{n}:=2^{4} \left( \alpha^2 \frac{C_2(2,T)}{Y(t_n)^2} + |\beta|^2 M_{1,2}(1+Y(t_n)^2)\right)
\end{equation*}
is an $\mathcal{F}_{t_n}$-measurable random variable with finite expectation, and $C_2(2,T)$, $M_{1,2}$ are the constants defined by \eqref{eq:equM-2} and \eqref{eq:equCondMp2} in the statements of Lemmas \ref{lem:fin} and \ref{lem:posMB} respectively, setting $p=2$. 
\end{lemma}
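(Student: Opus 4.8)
The plan is to work directly from the integral form of \eqref{eq:Lamperti}, writing the increment over $[t_n,s]$ as a drift part plus a noise part,
\[
Y(s)-Y(t_n) = \int_{t_n}^s \left( \frac{\alpha}{Y(u)} - \beta Y(u)\right) du + \gamma\left(W(s)-W(t_n)\right),
\]
then squaring and taking $\mathbb{E}[\,\cdot\,|\mathcal{F}_{t_n}]$. First I would separate the two contributions with the elementary inequality $(a+b)^2\leq 2a^2+2b^2$ (or, tracking this as the special case of the supremum-type bound in Lemma 13 of \cite{KLM2020}, via the triangle inequality followed by Doob's $L^2$ maximal inequality, which is what produces the factor $4$ multiplying $\gamma^2$). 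The noise part is handled cleanly: conditionally on $\mathcal{F}_{t_n}$ the variable $W(s)-W(t_n)$ is a centred Gaussian of variance $|s-t_n|$, by optional sampling exactly as recorded after Definition \ref{def:Nt}, so $\mathbb{E}[|W(s)-W(t_n)|^2\,|\,\mathcal{F}_{t_n}] = |s-t_n|$ a.s., yielding the leading $O(|s-t_n|)$ term.

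The drift part is where the $|s-t_n|^2$ term and the random constant $\bar L_n$ emerge. Applying the Cauchy--Schwarz inequality in time converts the squared time-integral into
\[
\left|\int_{t_n}^s \left(\frac{\alpha}{Y(u)} - \beta Y(u)\right) du\right|^2 \leq |s-t_n|\int_{t_n}^s \left|\frac{\alpha}{Y(u)} - \beta Y(u)\right|^2 du,
\]
and then $|\alpha/Y(u)-\beta Y(u)|^2 \leq 2\alpha^2 Y(u)^{-2} + 2\beta^2 Y(u)^2$ splits the integrand into an inverse-moment and a positive-moment term. Taking $\mathbb{E}[\,\cdot\,|\mathcal{F}_{t_n}]$ inside the integral (conditional Tonelli, the integrand being nonnegative) and invoking Lemma \ref{lem:fin} with $p=2$ to bound $\mathbb{E}[Y(u)^{-2}|\mathcal{F}_{t_n}]\leq C_2(2,T)Y(t_n)^{-2}$, together with Lemma \ref{lem:posMB} (inequality \eqref{eq:equCondMp2}) with $p=2$ to bound $\mathbb{E}[Y(u)^2|\mathcal{F}_{t_n}]\leq M_{1,2}(1+Y(t_n)^2)$ --- both uniform in $u\in[t_n,T]$ --- the inner integral contributes a second factor of $|s-t_n|$. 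Collecting constants reproduces $\bar L_n = 2^4\bigl(\alpha^2 C_2(2,T)Y(t_n)^{-2} + |\beta|^2 M_{1,2}(1+Y(t_n)^2)\bigr)$; this is $\mathcal{F}_{t_n}$-measurable and has finite expectation because the negative and positive second moments of $Y(t_n)$ are finite under Assumption \ref{as:2invMom}. The precise numerical constants (the $2^4$ and the $4$ on $\gamma^2$) are non-sharp overestimates produced by the particular chain of elementary inequalities.

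The main obstacle is the control of the inverse moment $\mathbb{E}[Y(u)^{-2}|\mathcal{F}_{t_n}]$: near the origin the drift $\alpha/Y$ of \eqref{eq:Lamperti} is singular, so without a quantitative bound on the negative moments the drift integral need not even be finite. This is precisely the role of Assumption \ref{as:2invMom} ($\alpha>3\sigma^2/8$, i.e. $\kappa\theta>\sigma^2$), which guarantees the existence of $C_2(2,T)$ in Lemma \ref{lem:fin}; everything else is a routine combination of Cauchy--Schwarz, the optional-sampling identity for the Wiener increment over the random interval, and the a priori moment bounds of Lemma \ref{lem:posMB}. A minor point requiring care is the measurability of $s$ relative to $t_n$ and the legitimacy of conditioning, which is already secured by the stopping-time framework of Definition \ref{def:fil}.
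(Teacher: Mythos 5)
Your argument is correct and self-contained, whereas the paper itself offers no proof of this lemma: it simply notes that the statement is a special case of Lemma~13 in \cite{KLM2020}. Your route --- integral form of \eqref{eq:Lamperti}, the split $(a+b)^2\leq 2a^2+2b^2$, the conditional It\^o/optional-sampling identity for $\mathbb{E}[|W(s)-W(t_n)|^2\,|\,\mathcal{F}_{t_n}]$, Cauchy--Schwarz in time, and the conditional moment bounds of Lemmas \ref{lem:fin} and \ref{lem:posMB} with $p=2$ --- is exactly the expected one, and it actually yields the sharper constants $2\gamma^2$ and $2^2$ in place of the stated $4\gamma^2$ and $2^4$; since the claimed bound is weaker, this is harmless, and your remark that the stated constants come from the supremum (Doob) version in \cite{KLM2020} is the right explanation. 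Two small points deserve a sentence each if this were written out in full. First, the final claim that $\bar L_n$ has \emph{finite expectation} requires $\mathbb{E}[Y(t_n)^{-2}]<\infty$ at the random time $t_n$; Lemma \ref{lem:fin} gives only conditional bounds looking forward from a mesh point, so one must either iterate that bound along the mesh ($\mathbb{E}[Y(t_k)^{-2}]\leq \Cp{2}(2,T)\,\mathbb{E}[Y(t_{k-1})^{-2}]$, finite for each fixed $k$ since $N$ is a.s.\ finite) or appeal to the supremum-type inverse-moment estimate in \cite{KLM2020}; your proposal asserts the finiteness without this step. Second, the application of Lemmas \ref{lem:fin} and \ref{lem:posMB} at intermediate times $u\in(t_n,s]$ and the conditional Tonelli interchange are legitimate exactly because $t_n$ is a stopping time and the bounds are uniform in $u$, as you note. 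Neither point is a gap in the mathematics, only in the level of detail; the proof stands.
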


\section{Main results: moment bounds and strong convergence}\label{sec:main}

\subsection{Moment bounds for the splitting scheme}
Our first main result is to prove a uniform moment bound for the scheme \eqref{eq:scheme1step} that applies over both deterministic and random meshes. 

%\begin{remark}
%\begin{definition}\label{def:Xcns}
%Define an a.s. continuous process $(\mathcal{X}(t))_{t\in[0,T]}$ pathwise as the a.e. linear interpolant of $X_n$ and $X_{n+1}$ on each interval $[t_n,t_{n+1}]$ for $n=0,\ldots,N-1$:
%\[
%\mathcal{X}(s):=\frac{t_{n+1}-s}{\Dt_{n+1}}X_n+\frac{s-t_n}{\Dt_{n+1}}X_{n+1},\quad s\in[t_n,t_{n+1}],\quad a.e.
%\]
%\end{definition}
%Note that since both $X_n$ and $X_{n+1}$ are a.s. positive, so is $\mathcal{X}(s)$.
%\end{remark}

\begin{theorem}\label{lem:SchemeMoment}
Let $\{X_n\}_{n\in\mathbb{N}}$ be a solution of \eqref{eq:scheme1step}, suppose that the (potentially random) mesh values $\{t_0,t_1,\ldots,t_N\}$ are selected  so that Assumption \ref{as:DtMeas} holds.
%and
%\begin{equation}\label{eq:DtmaxConstraint}
%\max_{n}\Dt_{n}\leq \Dtmax\leq\min\left\{1,\frac{1}{2\kappa^6}\right\}.    
%\end{equation}
Then, where $N^{(t)}$ is as given in Definition \ref{def:Nt}, 
%there exists $K_0<\infty$, independent of $t$ but that may depend on $T$, such that for $\mathcal{X}(t)$ as in Definition \ref{def:Xcns} we have
\[
\expect{X_{N^{(t)}}}\leq X_0+\kappa\theta T,\quad t\in[0,T].
\]
%\begin{equation}\label{eq:SchemeMoment}
%\max_{t\in[0,T]}\expect{\mathcal{X}(t)}\leq K_0.
%\end{equation}
\end{theorem}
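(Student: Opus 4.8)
The plan is to reduce the statement to a single-step conditional mean and then to package the resulting inequality as a supermartingale so that the random terminal index $N^{(t)}$ can be handled by optional stopping. First I would condition the recursion \eqref{eq:scheme1step} on $\mathcal{F}_{t_n}$. Since $X_n$ and $\Dt_{n+1}$ are $\mathcal{F}_{t_n}$-measurable while $\DW_{n+1}$ satisfies $\cexpect{\DW_{n+1}}=0$ and $\cexpect{(\DW_{n+1})^2}=\Dt_{n+1}$ a.s. (the conditional Gaussian identities recorded just after Definition \ref{def:Nt}), the linear noise term drops out and the quadratic term contributes $\sigma^2\Dt_{n+1}/4$, giving
\[
\cexpect{X_{n+1}} = e^{-\kappa\Dt_{n+1}}\left(X_n + 2\alpha\Dt_{n+1} + \tfrac{\sigma^2}{4}\Dt_{n+1}\right),\quad a.s.
\]
The decisive algebraic point is that the definition $\alpha=(4\kappa\theta-\sigma^2)/8$ from \eqref{eq:alphabetagamma} gives $2\alpha+\sigma^2/4=\kappa\theta$, so the bracket collapses to $X_n+\kappa\theta\Dt_{n+1}$; this is the exact discrete counterpart of the mean-reverting identity \eqref{eq:PREMeanRevert}. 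Using $X_n\geq0$ and $e^{-\kappa\Dt_{n+1}}\leq1$ then yields $\cexpect{X_{n+1}}\leq X_n+\kappa\theta\Dt_{n+1}$ a.s.

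Next I would set $V_n:=X_n-\kappa\theta t_n$. Because $\Dt_{n+1}=t_{n+1}-t_n$, the previous inequality reads $\cexpect{V_{n+1}}\leq V_n$, so $(V_n)_n$ is a supermartingale with respect to the discrete filtration $(\mathcal{F}_{t_n})_n$; integrability of each $V_n$ follows by induction from the one-step bound together with $\Dt_{n+1}\leq\Dtmax$. Crucially, $V_n\geq-\kappa\theta T$ since $X_n\geq0$ and $t_n\leq T$, so $(V_n)$ is bounded below.

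It remains to stop at $N^{(t)}$. This index is an $(\mathcal{F}_{t_n})$-stopping time because, from Definition \ref{def:Nt}, $\{N^{(t)}\leq k\}=\{t_k\geq t\}\in\mathcal{F}_{t_k}$, using that each $t_k$ is an $\mathcal{F}_t$-stopping time. Optional stopping applied to the bounded stopping time $N^{(t)}\wedge k$ gives $\expect{V_{N^{(t)}\wedge k}}\leq\expect{V_0}=X_0$, and since $N^{(t)}\leq N<\infty$ a.s.\ (Assumption \ref{as:DtMeas}) we have $V_{N^{(t)}\wedge k}\to V_{N^{(t)}}$ a.s.; boundedness below lets Fatou's lemma pass to the limit, so $\expect{V_{N^{(t)}}}\leq X_0$. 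The conclusion follows from $\expect{X_{N^{(t)}}}=\expect{V_{N^{(t)}}}+\kappa\theta\expect{t_{N^{(t)}}}$ together with $t_{N^{(t)}}\leq t_N=T$.

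The main obstacle is precisely this last step: under Assumption \ref{as:DtMeas} alone the step count $N$ is only a.s.\ finite rather than bounded by a deterministic constant (the latter being Assumption \ref{as:Nbounded}), so one cannot stop the supermartingale at $N^{(t)}$ directly. The nonnegativity $X_n\geq0$, which forces $V_n$ to be bounded below, is exactly what rescues the limit argument through Fatou. An equivalent route that sidesteps optional stopping is to telescope using the $\mathcal{F}_{t_n}$-measurable indicators $\mathbf{1}_{\{n<N^{(t)}\}}=\mathbf{1}_{\{t_n<t\}}$ and to sum the bounds $\cexpect{X_{n+1}-X_n}\leq\kappa\theta\Dt_{n+1}$, the interchange of summation and expectation being justified by the same boundedness-below structure.
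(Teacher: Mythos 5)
Your proof is correct, and the first half is identical to the paper's: both condition \eqref{eq:scheme1step} on $\mathcal{F}_{t_n}$, use the conditional Gaussian moments of $\DW_{n+1}$ together with the identity $2\alpha+\sigma^2/4=\kappa\theta$ to collapse the bracket, and arrive at $\cexpect{X_{n+1}}\leq X_n+\kappa\theta\Dt_{n+1}$ a.s. Where you diverge is in handling the random index $N^{(t)}$. The paper multiplies the one-step inequality by the $\mathcal{F}_{t_n}$-measurable indicator $\mathcal{I}_{\{N^{(t)}\geq n+1\}}$, sums over all $n$, and appeals to the Dominated Convergence Theorem to exchange the infinite sum with the expectation so that the left side telescopes to $\expect{X_{N^{(t)}}}-X_0$; this is exactly the ``equivalent route'' you sketch in your closing remarks. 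You instead package $V_n:=X_n-\kappa\theta t_n$ as a supermartingale bounded below by $-\kappa\theta T$, stop at the bounded time $N^{(t)}\wedge k$, and pass to the limit via Fatou. Your version makes the limit interchange more transparent --- the nonnegativity of $X_n$ supplies the one-sided bound that Fatou needs, whereas the paper's invocation of dominated convergence leaves the dominating function unexhibited --- and, like the paper, it uses only the a.s.\ finiteness of $N$ from Assumption \ref{as:DtMeas} rather than the deterministic bound of Assumption \ref{as:Nbounded}. The one small point you should make explicit is that the scheme only defines $X_n$ for $n\leq N$, so to speak of the supermartingale $(V_n)_{n\leq k}$ for arbitrary $k$ you need the harmless convention of freezing the process after step $N$ (e.g.\ $\Dt_n=0$ for $n>N$); the paper's indicator formulation sidesteps this because the terms with $n\geq N^{(t)}$ vanish identically.
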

% \begin{remark}\label{rem:hmaxConstraint}
% Note that in the statement of Theorem \ref{lem:SchemeMoment}, the use of a deterministic mesh allows the RHS of \eqref{eq:DtmaxConstraint} to be relaxed to $\Dtmax\leq 1$. \end{remark}
\begin{proof}
Fix $t\in [0,T]$ and let $N^{(t)}$ be as in Definition \ref{def:Nt}. From the form of \eqref{eq:schemeSq}, $X_n>0$, $n=0,\ldots,N^{(t)}$, a.s.
Take the expectation of both sides, conditional upon $\mathcal{F}_{t_n}$, to get 
\begin{eqnarray*}
\lefteqn{\cexpect{X_{n+1}}}\\
& =& e^{-\kappa\Dt_{n+1}}\left(X_n +2\alpha\Dt_{n+1}+\sigma\sqrt{X_n+2\alpha\Dt_{n+1}}\cexpect{\DW_{n+1}}\right.\\
&&\left. +\frac{\sigma^2}{4}\cexpect{(\DW_{n+1})^2}\right)\\
&=&e^{-\kappa\Dt_{n+1}}\left(X_n+\Dt_{n+1}\left(2\alpha+\frac{\sigma^2}{4}\right)\right)\\
&=&e^{-\kappa\Dt_{n+1}}(X_n+\kappa\theta\Dt_{n+1}) \\
& \leq & X_n+\kappa\theta\Dt_{n+1},
\end{eqnarray*}
where we have used the fact that $2\alpha+\sigma^2/4=\kappa\theta$ at the third step and the final inequality holds since $\kappa\geq 0$, $e^{-\kappa\Dt_{n+1}}\leq 1$.

Multiplying both sides by the indicator random variable $\mathcal{I}_{\{N^{(t)}\geq n+1\}}$, we have that a.s.
$$
\cexpect{X_{n+1}}\mathcal{I}_{\{N^{(t)}\geq n+1\}}-X_n\mathcal{I}_{\{N^{(t)}\geq n+1\}}\leq 
\kappa\theta\Dt_{n+1}\mathcal{I}_{\{N^{(t)}\geq n+1\}}.
$$
We now sum both sides over $n\in\mathbb{N}$ and take expectations to get 
\begin{multline}\label{eq:newSumNov}
\expect{\sum_{n=0}^{\infty}\left(\cexpect{X_{n+1}}\mathcal{I}_{\{N^{(t)}\geq n+1\}}-X_n\mathcal{I}_{\{N^{(t)}\geq n+1\}}\right)}\\
\leq \expect{\sum_{n=0}^{\infty}\kappa\theta\Dt_{n+1}\mathcal{I}_{\{N^{(t)}\geq n+1\}}}=\kappa\theta T.
\end{multline}
Appealing to the Dominated Convergence Theorem we can exchange the expectation and infinite sum on the LHS of \eqref{eq:newSumNov} and
\begin{eqnarray}
\lefteqn{\expect{\sum_{n=0}^{\infty}\left(\cexpect{X_{n+1}\mathcal{I}_{\{N^{(t)}\geq n+1\}}}-X_n\mathcal{I}_{\{N^{(t)}\geq n+1\}}\right)}}\nonumber\\
&=&\sum_{n=0}^{\infty}\expect{\left(X_{n+1}\mathcal{I}_{\{N^{(t)}\geq n+1\}}-X_n\mathcal{I}_{\{N^{(t)}\geq n+1\}}\right)}\nonumber\\
&=&\expect{\sum_{n=0}^{N^{(t)}-1}\left(X_{n+1}-X_n\right)}=\expect{X_{N^{(t)}}}-X_0.\label{eq:LHS}
\end{eqnarray}
Combining \eqref{eq:newSumNov} and \eqref{eq:LHS} gives the result.
\end{proof}

\subsection{An error bound in $L_1$ for CIR} 

In this section we investigate the $L_2$-strong error of the continuous form of the Lamperti based scheme \eqref{eq:CtsLamp} against the true solution of the transformed equation \eqref{eq:LampVoC}. This convergence result holds if we use a random mesh with a bounded number of steps as given in Assumption \ref{as:Nbounded} Part A, though we do not require Part B of that Assumption.

\begin{theorem}\label{lem:Yconv}
Let $(Y(t))_{t\in[0,T]}$ be a solution of \eqref{eq:Lamperti} and $\{Y_n\}_{n\in\mathbb{N}}$ be a solution of \eqref{eq:DiscLamperti}, and $\bar Y$ the continuous version given by \eqref{eq:CtsLamp}. Suppose also that Assumption \ref{as:2invMom} holds, and the (potentially random) mesh values $\{t_0,t_1,\ldots,t_N\}$ are selected  so that Assumptions \ref{as:DtMeas} and \ref{as:Nbounded} Part A hold, and 
\begin{equation}\label{eq:hmaxBound}
\max_{n}\Dt_{n} \leq \Dtmax\leq\min \left\{1,\frac{1}{\kappa}\right\}.%,\frac{1}{2\kappa^6}\right\}.
\end{equation}

Then there exists a constant $\Cp{3}<\infty$ such that
\begin{equation}\label{eq:errortbound}
    \max_{t\in[0,T]}\expect{\Ey{t}^2} \leq \Cp{3}\Dtmax^{1/2},
\end{equation}
where the error process 
\begin{equation}\label{eq:EyEyO}
\Ey{t}:=Y(t)-\bar Y(t),\quad t\in[0,T],
\end{equation}
and for each $n=0,
\ldots,N_{\max}$,
\begin{equation}\label{eq:L1meshpointbound}
\expect{\Ey{t_n}^2\mathcal{I}_{\{N\geq n\}}} \leq \Cp{3}\Dtmax^{1/2}.
\end{equation}
\end{theorem}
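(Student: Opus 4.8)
The plan is to run a one-step error recursion at the mesh nodes, prove \eqref{eq:L1meshpointbound} by a discrete stochastic Gronwall argument carried out exactly as in the proof of Theorem \ref{lem:SchemeMoment}, and then obtain \eqref{eq:errortbound} by appending a single sub-step from the last node before $t$. First I would fix a step $[t_n,t_{n+1}]$ and subtract the variation-of-constants representation \eqref{eq:LampVoC} of $Y$ from the continuous scheme \eqref{eq:CtsLamp}, writing, for $t\in[t_n,t_{n+1}]$,
\be
\Ey{t}=e^{-\beta(t-t_n)}\Ey{t_n}+D(t)+R(t),
\ee
where $\Ey{t_n}=Y(t_n)-Y_n$ is the incoming error, $D(t)$ is the drift discrepancy between $\alpha/Y(s)$ and $\alpha/\phi(s)$ with $\phi(s):=\sqrt{Y_n^2+2\alpha(s-t_n)}$ the exact sub-flow used by the scheme, and $R(t)=\gamma\int_{t_n}^t(e^{-\beta(t-s)}-e^{-\beta(t-t_n)})\,dW(s)$ is the discrepancy between the exact and exponential-integrator treatments of the noise. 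Since its integrand is $\mathcal{F}_{t_n}$-measurable and $\Dt_{n+1}$ is a bounded stopping time, $R$ is conditionally mean-zero with $\cexpect{R(t_{n+1})^2}=O(\Dt_{n+1}^3)$ by the \Ito isometry and optional sampling, so it is harmless.

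Squaring at $t=t_{n+1}$ and taking $\cexpect{\cdot}$, the factor $e^{-2\beta\Dt_{n+1}}\le1$ damps the propagated term, the noise cross-term vanishes, and there remain the drift cross-term $2e^{-\beta\Dt_{n+1}}\Ey{t_n}\cexpect{D(t_{n+1})}$ together with the quadratic $\cexpect{(D(t_{n+1})+R(t_{n+1}))^2}$. Everything hinges on controlling $D$. Writing $1/Y-1/\phi=(\phi-Y)/(Y\phi)$ and using $\phi(s)\ge Y_n$ (valid because $\alpha>0$ under Assumption \ref{as:2invMom}) to extract $1/\phi\le1/Y_n$ as an $\mathcal{F}_{t_n}$-measurable factor, I would split $\phi-Y$ into the deterministic sub-flow defect $\phi(s)-Y_n=O(\Dt)$, the fluctuation $Y(t_n)-Y(s)$, and the incoming error $-\Ey{t_n}$. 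For the conditional mean I would pair $\cexpect{|Y(t_n)-Y(s)|/Y(s)}$ by Cauchy--Schwarz with the $L^2$ regularity bound of Lemma \ref{lem:adap} (giving $O(\Dt^{1/2})$) and the second inverse moment of Lemma \ref{lem:fin} with $p=2$ (giving $O(1/Y(t_n))$), yielding a local contribution of order $\Dt^{3/2}$; the error-proportional part is handled using the monotonicity of $y\mapsto\alpha/y-\beta y$, whose favourable sign lets it be absorbed into a $(1+C\Dt_{n+1})$ propagation factor rather than a singular one.

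The main obstacle is precisely this drift estimate under the restriction to the first two inverse moments: the naive Cauchy--Schwarz bound of the \emph{square} $\cexpect{|Y(t_n)-Y(s)|^2/Y(s)^2}$ would call for a fourth inverse moment, which Lemma \ref{lem:fin} does not supply, so one must settle for the coarser pairing available with $p=1,2$. It is this loss --- rather than the true local order of the scheme --- that caps the provable rate at $\Dtmax^{1/2}$ in mean square (hence $1/4$ in $L_2$) and that forces Assumption \ref{as:2invMom}; indeed the sharper numerical rate reported later shows the bound is not tight. Assembling the above gives a recursion of the form $\cexpect{\Ey{t_{n+1}}^2}\le(1+C\Dt_{n+1})\Ey{t_n}^2+C\Dt_{n+1}^{3/2}(1+Y(t_n)^{-2})$, whose random coefficients are controlled in full expectation through Lemmas \ref{lem:posMB} and \ref{lem:fin}.

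Finally I would globalise exactly as in Theorem \ref{lem:SchemeMoment}: multiply by $\mathcal{I}_{\{N\ge n+1\}}$, sum over $n$, take expectations, and interchange sum and expectation by dominated convergence, which is legitimate because Assumption \ref{as:Nbounded} Part A bounds $N\le N_{\max}$. Bounding the Gronwall product by $\prod(1+C\Dt_k)\le e^{CT}$ and using $\sum_n\Dt_{n+1}^{3/2}\le\Dtmax^{1/2}\sum_n\Dt_{n+1}=\Dtmax^{1/2}T$ --- the step that relies only on the steps telescoping to $T$, so that Part B is not needed --- yields \eqref{eq:L1meshpointbound}. For \eqref{eq:errortbound} I would, for arbitrary $t$, apply the one-step bound once more from $t_{N^{(t)}-1}$ to $t$ (Definition \ref{def:Nt}); since a single additional sub-step only adds $O(\Dtmax^{3/2})$ to the already-established $O(\Dtmax^{1/2})$ node bound, the maximum over $t\in[0,T]$ remains $O(\Dtmax^{1/2})$.
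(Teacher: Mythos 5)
Your global architecture (one-step estimate at the nodes, multiply by indicator variables, telescope, treat the final partial step separately) matches the paper's, and your observation that only Part A of Assumption \ref{as:Nbounded} is needed because $\sum_n\Dt_{n+1}^{3/2}\le\Dtmax^{1/2}T$ is exactly right. The gap is in the local drift estimate, which is where all the work lies. First, the claim that the sub-flow defect satisfies $\phi(s)-Y_n=O(\Dt)$ is false uniformly: $\phi(s)-Y_n=2\alpha(s-t_n)/(\phi(s)+Y_n)\le\sqrt{2\alpha(s-t_n)}$, and it is of order $\Dt$ only after dividing by $Y_n$. Second, and more seriously, extracting $1/\phi(s)\le 1/Y_n$ as an $\mathcal{F}_{t_n}$-measurable factor leaves you with inverse powers of the \emph{numerical} solution, for which no moment bounds exist (Lemma \ref{lem:fin} controls the exact solution only, and $Y_n$ can be arbitrarily small, indeed negative, since only $X_n=Y_n^2$ is sign-constrained). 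Third, even replacing $1/\phi(s)$ by the usable bound $1/\sqrt{2\alpha(s-t_n)}$, your proposed pairing for the cross term --- $|\Ey{t_n}|$ against $\cexpect{|Y(t_n)-Y(s)|/Y(s)}\lesssim \Dt^{1/2}/Y(t_n)$ via Lemmas \ref{lem:adap} and \ref{lem:fin} --- produces, after Young's inequality, a per-step contribution of size $\Dt_{n+1}\,Y(t_n)^{-2}$ rather than $\Dt_{n+1}^{3/2}\,Y(t_n)^{-2}$, and this sums to $O(1)$, not $O(\Dtmax^{1/2})$. So the recursion you state at the end, which would indeed close the argument, is not reached by the route you sketch.

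The paper avoids all of this by applying It\^o's formula to $\Ey{u}^2$ on $[t_n,t_{n+1}]$, so that the dangerous term appears pointwise in $s$ as $\cexpect{\Ey{s}\tilde f(s)}$ with the multiplier $Y(s)-Y_n$ rather than the node error $\Ey{t_n}$; the fluctuation and the incoming error then combine into a single term whose product with the multiplier is the \emph{negative} square $-(Y(s)-Y_n)^2/(Y(s)\phi(s))$, and a case analysis on the sign of $Y(s)-Y_n$ plus $2ab\le a^2+b^2$ lets this negative square absorb exactly the cross terms you cannot control. What survives is $O(\Dt^{3/2})\cexpect{Y(s)}+O(\Dt^{1/2})\cexpect{Y(s)^{-1}}$ (see \eqref{eq:EsFt}), with $1/\phi$ always bounded by $1/\sqrt{2\alpha(s-t_n)}$; this is the true origin of the half-order loss, not the unavailability of fourth inverse moments --- in fact only $p=1$ of Lemma \ref{lem:fin} is used in this theorem, and Lemma \ref{lem:adap} enters only later, in Theorem \ref{thm:main}. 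Note also that the coefficient propagating $\Ey{t_n}^2$ is exactly $1$ (the $-\beta\int\cexpect{\Ey{s}^2}\,ds$ term is simply discarded), so no Gronwall factor is required. Your one-sided-Lipschitz instinct is the right one, but it must be applied to $Y(s)-Y_n$ inside the time integral, not to $\Ey{t_n}$ outside it.
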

%The bound on the RHS of \eqref{eq:hmaxBound} is required to ensure that the moment bound in Theorem \ref{lem:SchemeMoment} can be used, with the additional constraint $\Dt_n\leq 1/\kappa$ required for the proof of Theorem \ref{lem:Yconv}. The RHS of \eqref{eq:hmaxBound} reduces to $\{1,1/\kappa\}$ if the mesh values are selected deterministically.

The condition $\kappa\theta>\sigma^2$ in Assumption \ref{as:2invMom} implies Assumption \ref{as:alphaPos}. Furthermore $L_1$ convergence for the  CIR scheme \eqref{eq:schemeSq} follows by an application of the Cauchy-Schwarz inequality. 

\begin{proof}
Fix $t\in[0,T]$, and let $N^{(t)}$ be as given in Definition \ref{def:Nt}. Using that
$$
e^{-\beta(u-t_n)} = e^{-\beta(u-s)}e^{-\beta(s-t_n)},\quad t_n\leq s<u\leq t_{n+1},
$$
the error process $\Ey{u}$ satisfies
\begin{multline} \label{eq:ErrorVoC}
\Ey{u}  =  e^{-\beta(u-t_n)} \Ey{t_n} \\+ \alpha\int_{t_n}^u e^{-\beta(u-s)}
\left(\frac{1}{Y(s)} - \frac{e^{-\beta(s-t_n)}}{\sqrt{(Y_n)^2 + 2\alpha(s-t_n)}} 
\right) ds \\
 + \gamma \int_{t_n}^u e^{-\beta(u-s)}\left( 1- e^{-\beta(s-t_n)} \right) dW(s),\quad u\in[t_n,t_{n+1}].
\end{multline}
We introduce the notation 
$$
\tilde{f}(s):=\frac{1}{Y(s)} - \frac{e^{-\beta(s-t_n)}}{\sqrt{(Y_n)^2+2\alpha(s-t_n)}},
$$
and re-write \eqref{eq:ErrorVoC} as the SDE on $[t_n,t_{n+1}]$
\begin{equation*}
d\Ey{u} = \left[\alpha \tilde{f}(u) - \beta \Ey{u}\right]du + \gamma\left[ 1- e^{-\beta(u-t_n)} \right] dW(u).
\end{equation*}
We can apply the It\^o formula to the vector-valued process $[\Ey{u},Y(u)]^T$ to derive, 
for $u\in[t_n,t_{n+1}]$,
\begin{multline*}
  \Ey{u}^2  =  \Ey{t_n}^2 \\+ 2\int_{t_n}^u \left(\alpha \Ey{s} \tilde{f}(s) - \beta \Ey{s}^2+\gamma^2(1-e^{-\beta(s-t_n)})^2\right) ds \\
 + 2\gamma\int_{t_n}^u \Ey{s} \left(1- e^{-\beta(s-t_n)}  \right) dW(s).
\end{multline*}
Now take expectations conditional upon $\mathcal{F}_{t_n}$,
\begin{multline}
\cexpect{\Ey{u}^2}   = \underbrace{\Ey{t_n}^2}_{=:I}+ \underbrace{2\alpha \int_{t_n}^u \cexpect{ \Ey{s}\tilde{f}(s)}ds}_{=:II}\\   +\underbrace{2\gamma^2\int_{t_n}^{u}(1-e^{-\beta(s-t_n)})ds}_{=:III} -\underbrace{ \beta \int_{t_n}^{u}\cexpect{\Ey{s}^2 } ds}_{=:IV},\quad u\in [t_n,t_{n+1}],\quad a.s.
\label{eq:condError}
\end{multline}
Note that the term $IV$ is positive since $\beta>0$, and since it is subtracted from the RHS it can be omitted in any estimate from above. For $III$, by \eqref{eq:hmaxBound} $\Dtmax<1/\kappa$, and \eqref{eq:alphabetagamma} ($\beta=\kappa/2$), so there exists $\zeta\in[0,\kappa\Dt_{n+1}]$ such that
\begin{equation}\label{eq:expTay}
1-e^{-\beta|u-t_n|}=\frac{\kappa}{2}|u-t_n|-\zeta^2/2,\quad u\in[t_n,t_{n+1}],
\end{equation}
and therefore 
\begin{equation*}
  III \leq \kappa\gamma^2(u-t_n)^2,\quad u\in[t_n,t_{n+1}].
\end{equation*}
It remains to bound $II$. There are two cases, each determined by the sign of $\Ey{s}=Y(s)-Y_n$:

\noindent \underline{Case 1:} When $Y(s)-Y_n<0$ then 
$$
e^{-\beta(s-t_n)}Y(s)-\sqrt{(Y_n)^2+2\alpha(s-t_n)}\leq Y_n-\sqrt{(Y_n)^2+2\alpha(s-t_n)}<0,
$$
and
\begin{eqnarray}
 \Ey{s}\tilde{f}(s)  &= & 
\left(Y(s)-Y_n \right)\frac{\sqrt{(Y_n)^2+2\alpha(s-t_n)}
-e^{-\beta(s-t_n)}Y(s)}{Y(s)\sqrt{(Y_n)^2 + 2\alpha(s-t_n)}}\nonumber \\
&= &\left|Y(s)-Y_n \right|\frac{e^{-\beta(s-t_n)}Y(s)-\sqrt{(Y_n)^2+2\alpha(s-t_n)}}{Y(s)\sqrt{(Y_n)^2 + 2\alpha(s-t_n)}}\label{eq:EftildeC1}
\end{eqnarray}
is negative for all $s\in[t_n,t_{n+1}]$. 

\noindent \underline{Case 2:}  Suppose $Y(s)-Y_n>0$. Then applying the inequality 
$$ 
\sqrt{(Y_n)^2 + 2\alpha(s-t_n)} \leq Y_n + \sqrt{2\alpha(s-t_n)}
$$
we get
\begin{eqnarray}
  \Ey{s}\tilde{f}(s) & = & 
\lefteqn{\left(Y(s)-Y_n \right)\frac{\sqrt{(Y_n)^2+2\alpha(s-t_n)}
-e^{-\beta(s-t_n)}Y(s)}{Y(s)\sqrt{(Y_n)^2 + 2\alpha(s-t_n)}} }\nonumber\\
&\leq&
\left(Y(s)-Y_n \right)\frac{Y_n-e^{-\beta(s-t_n)}Y(s)+\sqrt{2\alpha(s-t_n)}}
     {Y(s)\sqrt{(Y_n)^2 + 2\alpha(s-t_n)}},\nonumber
\end{eqnarray}
for all $s\in[t_n,t_{n+1}]$. On the RHS, add and subtract $Y(s)$ in the numerator and split into three terms:
\begin{eqnarray}     
\lefteqn{\Ey{s}\tilde{f}(s)}\nonumber\\
  &=&\left(Y(s)-Y_n \right)\left(\frac{Y_n-Y(s)+(1-e^{-\beta(s-t_n)})Y(s)+\sqrt{2\alpha(s-t_n)}}{Y(s)\sqrt{(Y_n)^2 + 2\alpha(s-t_n)}}\right)\nonumber \\
&=&
\frac{-(Y(s)-Y_n)^2}{Y(s)\sqrt{(Y_n)^2 + 2\alpha(s-t_n)}}+\frac{(Y(s)-Y_n)(1-e^{-\beta(s-t_n)})Y(s)}{Y(s)\sqrt{(Y_n)^2+2\alpha(s-t_n)}}\nonumber\\
&&\qquad\qquad +\frac{(Y(s)-Y_n)\sqrt{2\alpha(s-t_n)}}
{Y(s)\sqrt{(Y_n)^2 + 2\alpha(s-t_n)}},\quad s\in[t_n,t_{n+1}].\nonumber
\end{eqnarray}
For second and third terms use that $2ab \leq a^2 + b^2$ and then cancelling with first term we find
\begin{eqnarray*}     
\lefteqn{ \Ey{s}\tilde{f}(s)}\\ &\leq&
\frac{-(Y(s)-Y_n)^2}{Y(s)\sqrt{(Y_n)^2 + 2\alpha(s-t_n)}}+\frac{(Y(s)-Y_n)^2+(1-e^{-\beta(s-t_n)})^2Y(s)^2}{2Y(s)\sqrt{(Y_n)^2+2\alpha(s-t_n)}}\nonumber\\
&&\qquad\qquad +\frac{(Y(t_n)-Y_n)^2 + 2\alpha(s-t_n)}
{2Y(s)\sqrt{(Y_n)^2 + 2\alpha(s-t_n)}}\nonumber \\
&\leq&\frac{(1-e^{-\beta(s-t_n)})^2Y(s)}{2\sqrt{(Y_n)^2+2\alpha(s-t_n)}} + 
\frac{\alpha(s-t_n)}
{Y(s)\sqrt{(Y_n)^2 + 2\alpha(s-t_n)}},\quad s\in[t_n,t_{n+1}]. 
\end{eqnarray*}
By \eqref{eq:expTay} applied to the first term we have
\begin{equation}\label{eq:EftildeC2}
  \Ey{s}\tilde{f}(s) \leq \frac{4\beta^2(s-t_n)^2Y(s)}{2\sqrt{(Y_n)^2+2\alpha(s-t_n)}} + 
  \frac{\alpha(s-t_n)}{Y(s)\sqrt{(Y_n)^2 + 2\alpha(s-t_n)}},\quad s\in[t_n,t_{n+1}].
\end{equation}

Taking conditional expectation on both sides of \eqref{eq:EftildeC1} and \eqref{eq:EftildeC2} we have that the following holds for both of Cases 1 and 2 (regardless of the sign of $\Ey{s}$):
\begin{multline}     
\cexpect{\Ey{s}\tilde{f}(s)}
 \leq  \frac{2\beta^2(s-t_n)^2}{\sqrt{(Y_n)^2+2\alpha(s-t_n)}}\cexpect{Y(s)} \\ 
 + 
\frac{\alpha(s-t_n)}{\sqrt{(Y_n)^2 + 2\alpha(s-t_n)}}\cexpect{Y(s)^{-1}},\quad s\in[t_n,t_{n+1}],\quad a.s.
\label{eq:EsFt}
\end{multline}
On the RHS of \eqref{eq:EsFt} we can bound $(Y_n)^2$ by zero from below and $s-t_n\leq\Dtmax$ to get a.s. for $s\in[t_n,t_{n+1}]$,
\begin{equation*}     
\cexpect{\Ey{s}\tilde{f}(s)}
\leq \frac{\sqrt{2}\beta^2}{\sqrt{\alpha}}\Dtmax^{3/2}\cexpect{Y(s)} + 
{\sqrt{\frac{\alpha}{2}\Dtmax}}\cexpect{Y(s)^{-1}}.
\end{equation*}
Substituting back into $II$ we see that 
\begin{multline}\label{eq:IIbound1}
II=2\alpha\int_{t_n}^{u}\cexpect{\Ey{s}\tilde{f}(s)}ds
\leq 2\sqrt{2\alpha}\beta^2\Dtmax^{3/2}\int_{t_n}^{u}\cexpect{Y(s)}ds\\ + 
\alpha\sqrt{2\alpha\Dtmax}\int_{t_n}^u\cexpect{Y(s)^{-1}}ds,\quad u\in[t_n,t_{n+1}],\quad a.s.
\end{multline}
Combining all estimates for the RHS of \eqref{eq:condError} we get 
  \begin{multline}\label{eq:onestepL1}
  \cexpect{\Ey{u}^2} \leq \Ey{t_n}^2 +
 2\sqrt{2\alpha}\beta^2\Dtmax^{3/2}\int_{t_n}^{u}\cexpect{Y(s)}ds\\
+ \alpha\sqrt{2\alpha\Dtmax}\int_{t_n}^u\cexpect{Y(s)^{-1}}ds+\kappa\gamma^2(u-t_n)^2,\quad u\in[t_n,t_{n+1}],\quad a.s.
\end{multline}

On both sides of \eqref{eq:onestepL1} set $u=t_{n+1}$ and multiply both sides of \eqref{eq:onestepL1} by the indicator random variable $\mathcal{I}_{\{N^{(t)}> n+1\}}$, so that a.s.
  \begin{multline}\label{eq:onestepL1ind}
  \cexpect{\Ey{t_{n+1}}^2} \mathcal{I}_{\{N^{(t)}> n+1\}}- \Ey{t_n}^2\mathcal{I}_{\{N^{(t)}> n+1\}}\\ \leq
  2\sqrt{2\alpha}\beta^2\Dtmax^{3/2}\int_{t_n}^{t_{n+1}}\cexpect{Y(s)}\mathcal{I}_{\{N^{(t)}> n+1\}}ds\\
+ \alpha\sqrt{2\alpha\Dtmax}\int_{t_n}^{t_{n+1}}\cexpect{Y(s)^{-1}}\mathcal{I}_{\{N^{(t)}> n+1\}}ds\\+\kappa\gamma^2\Dt_{n+1}^2\mathcal{I}_{\{N^{(t)}> n+1\}}.
\end{multline}
Now sum both sides of \eqref{eq:onestepL1ind} over all steps, excluding the last step $N^{(t)}$, to get a.s.
\begin{multline}\label{eq:exceptLastStep}
    \sum_{n=0}^{N^{(t)}-2}\cexpect{\Ey{t_{n+1}}^2} \mathcal{I}_{\{N^{(t)}> n+1\}}- \sum_{n=0}^{N^{(t)}-2}\Ey{t_n}^2\mathcal{I}_{\{N^{(t)}> n+1\}}\\ \leq
  2\sqrt{2\alpha}\beta^2\Dtmax^{3/2}\sum_{n=0}^{N^{(t)}-2}\int_{t_n}^{t_{n+1}}\cexpect{Y(s)}\mathcal{I}_{\{N^{(t)}> n+1\}}ds\\
+ \alpha\sqrt{2\alpha\Dtmax}\sum_{n=0}^{N^{(t)}-2}\int_{t_n}^{t_{n+1}}\cexpect{Y(s)^{-1}}\mathcal{I}_{\{N^{(t)}> n+1\}}ds\\
+\kappa\gamma^2\sum_{n=0}^{N^{(t)}-2}\Dt_{n+1}^2\mathcal{I}_{\{N^{(t)}> n+1\}}=:J_n.
\end{multline}
Since $t\in[t_{N^{(t)}-1},t_{N^{(t)}}]$, we use \eqref{eq:onestepL1} to express the last step, noting that it holds when $t_n$ and $u$ are replaced by $t_{N^{(t)}-1}$ and $t$ respectively:
\begin{multline}\label{eq:lastStep}
      \expect{\Ey{t}^2\big|\mathcal{F}_{t_{N^{(t)}-1}}}-\Ey{t_{N^{(t)}-1}}^2 \\ \leq
 2\sqrt{2\alpha}\beta^2\Dtmax^{3/2}\int_{t_{N^{(t)}-1}}^{t}\expect{Y(s)\big|\mathcal{F}_{t_{N^{(t)}-1}}}ds\\
+ \alpha\sqrt{2\alpha\Dtmax}\int_{t_{N^{(t)}-1}}^t\expect{Y(s)^{-1}\big|\mathcal{F}_{t_{N^{(t)}-1}}}ds\\+\kappa\gamma^2(t-t_{N^{(t)}-1})^2,\quad t\in[t_{N^{(t)}-1},t_{N^{(t)}}],\quad a.s.
\end{multline}
To complete the sum to $t$, add \eqref{eq:exceptLastStep} and \eqref{eq:lastStep} and take expectations. First consider the LHS of the result. Since $N^{(t)}$ is a random integer not exceeding $N_{\max}$,  $\mathcal{I}_{\{N^{(t)}> n+1\}}$ is $\mathcal{F}_{t_n}$-measurable, and $E_Y(t_0)=0$, we get
\begin{multline}\label{eq:LHSboiled}
    \sum_{n=0}^{N_{\max}-2}\expect{E_Y(t_{n+1})^2\mathcal{I}_{\{N^{(t)}> n+1\}}-E_Y(t_{n})^2\mathcal{I}_{\{N^{(t)}> n+1\}}}\\
    +\expect{\expect{\Ey{t}^2\big|\mathcal{F}_{t_{N^{(t)}-1}}}-\Ey{t_{N^{(t)}-1}}^2}\\
    =\expect{E_Y(t_{N^{(t)}-1})^2}-\expect{E_Y(t_0)^2}+\expect{E_Y(t)^2}-\expect{E_Y(t_{N^{(t)}-1})^2}\\
    =\expect{E_Y(t)^2}.
\end{multline}
To demonstrate our approach to the RHS of the expectation of the sum of  \eqref{eq:exceptLastStep} and \eqref{eq:lastStep}, consider as an example the first sum on the RHS of \eqref{eq:exceptLastStep}. If we take an expectation we can write
\begin{eqnarray*}
\lefteqn{\expect{\sum_{n=0}^{N^{(t)}-2}\int_{t_n}^{t_{n+1}}\cexpect{Y(s)}\mathcal{I}_{\{N^{(t)}> n+1\}}ds}}\\
&=&\expect{\sum_{n=0}^{N_{\max}-2}\cexpect{\int_{t_n}^{t_{n+1}}Y(s)\mathcal{I}_{\{N^{(t)}> n+1\}}ds}}\\
&=&\sum_{n=0}^{N_{\max}-2}\expect{\cexpect{\int_{t_n}^{t_{n+1}}Y(s)\mathcal{I}_{\{N^{(t)}> n+1\}}ds}}\\
&=&\sum_{n=0}^{N_{\max}-2}\expect{\int_{t_n}^{t_{n+1}}Y(s)\mathcal{I}_{\{N^{(t)}> n+1\}}ds}\\
&=&\expect{\sum_{n=0}^{N_{\max}-2}\int_{t_n}^{t_{n+1}}Y(s)\mathcal{I}_{\{s<t_{N^{(t)}-1}\}}ds}\\
&=&\expect{\int_{0}^{t_{N^{(t)}-1}}Y(s)ds}.
\end{eqnarray*}
Now consider the entirety of the RHS of the expectation of  \eqref{eq:exceptLastStep} and follow the same steps as above:
\begin{multline}\label{eq:RHSmore}
\expect{J_n}=2\sqrt{2\alpha}\beta^2\Dt_{\max}^{3/2}\expect{\int_{0}^{t_{N^{(t)}-1}}Y(s)ds}\\+\alpha\sqrt{2\alpha\Dt_{\max}}\expect{\int_{0}^{t_{N^{(t)}-1}}Y(s)^{-1}ds}+\kappa\gamma^2t_{N^{(t)}-1}\Dtmax.
\end{multline}
Taking expectation of the RHS of \eqref{eq:lastStep} and adding to \eqref{eq:RHSmore}, along with \eqref{eq:LHSboiled}, leads to the inequality
\begin{align*}
\expect{E_Y(t)^2}
\leq& 2\sqrt{2\alpha}\beta^2\Dt_{\max}^{3/2}\expect{\int_{0}^{t}Y(s)ds}\\
&+2\alpha\sqrt{2\alpha\Dt_{\max}}\expect{\int_{0}^{t}Y(s)^{-1}ds}+\kappa\gamma^2t\Dtmax\\
\leq&\Dtmax^{1/2}t\left(2\sqrt{2\alpha}\beta^2\Dt_{\max} M_{1,1}(1+Y_0)+\alpha\sqrt{2\alpha} \Cp{2}(1,t)Y_0^{-1}+\kappa\gamma^2\Dtmax^{1/2}\right),
\end{align*}
which gives \eqref{eq:errortbound} in the statement of the theorem with 
$$
\Cp{3}=T\left(2\sqrt{2\alpha}\beta^2 M_{1,1}(1+Y_0)+\alpha\sqrt{2\alpha} \Cp{2}(1,t)Y_0^{-1}+\kappa\gamma^2\right).
$$
The same argument, but terminating the summation of \eqref{eq:onestepL1ind} at $n+1$ for any $n=0,\ldots,N_{\max}-1$, leads to the error estimate \eqref{eq:L1meshpointbound}. 
\end{proof}

\subsection{An error bound in $L_2$ for the CIR splitting scheme}
Our third main result provides an order of mean-square convergence for the scheme on a potentially random mesh, where we assume both parts of Assumption \ref{as:Nbounded}.

\begin{definition}\label{def:L2errorCns}
Let $(X(t))_{t\in[0,T]}$ be a solution of \eqref{eq:CIRVoC2} and $\{X_n\}_{n\in\mathbb{N}}$ a solution of \eqref{eq:scheme1step}. Define the error at $t_n$ to be $E_n := X(t_n) - X_n$, and define an a.s. continuous process $(\mathcal{E}^2(t))_{t\in[0,T]}$ pathwise as the a.e. linear interpolant of $E_n^2$ and $E_{n+1}^2$ on each interval $[t_n,t_{n+1}]$ for $n=0,\ldots,N-1$:
\[
\mathcal{E}^2(s):=\frac{t_{n+1}-s}{\Dt_{n+1}}E_n^2+\frac{s-t_n}{\Dt_{n+1}}E_{n+1}^2,\quad s\in[t_n,t_{n+1}],\quad a.e.
\]
where $E_Y$ is defined in \eqref{eq:EyEyO}.
 \end{definition}

\begin{theorem}\label{thm:main}
Let $(X(t))_{t\in[0,T]}$ be a solution of \eqref{eq:CIRVoC2} and $\{X_n\}_{n\in\mathbb{N}}$ a solution of \eqref{eq:scheme1step}. Suppose that Assumptions \ref{as:2invMom}, \ref{as:DtMeas} and \ref{as:Nbounded} hold, and the (potentially random) mesh values $\{t_0,t_1,\ldots,t_N\}$ are selected so that  
\begin{equation}\label{eq:hmaxBoundX}
%\max_n\Dt_n\leq \Dtmax<\min\left\{1,\frac{1}{2\kappa},\frac{1}{2\kappa^6},\frac{1}{4\kappa|1-\kappa|+\theta\kappa^2}
\max_n\Dt_n\leq \Dtmax<\min\left\{1,\frac{1}{2\kappa},\frac{1}{4\kappa|1-\kappa|+\theta\kappa^2}
\right\}.
\end{equation}
Then there exists a constant $\Cp{4}<\infty$ such that
$$
\max_{t\in[0,T]}\expect{\mathcal{E}^2(t)}\leq \Cp{4}\Dtmax^{1/2},
$$
where $\mathcal{E}^2(t)$ is as in Definition \ref{def:L2errorCns}.
\end{theorem}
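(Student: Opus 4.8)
The goal is to bound $\expect{\mathcal{E}^2(t)}$ for the \emph{untransformed} scheme, having already established in Theorem \ref{lem:Yconv} an order-$1/4$ bound on $\expect{\Ey{t}^2}$ for the transformed scheme and at the mesh points via \eqref{eq:L1meshpointbound}. The natural strategy is to relate the error $E_n = X(t_n)-X_n$ in $X$ to the error $\Ey{t_n}=Y(t_n)-\bar Y(t_n)$ in $Y$, using $X=Y^2$ and $X_n=(Y_n)^2=(\bar Y(t_n))^2$. Since
\[
E_n = Y(t_n)^2 - \bar Y(t_n)^2 = \bigl(Y(t_n)+\bar Y(t_n)\bigr)\,\Ey{t_n},
\]
I would square this and take expectations, obtaining $\expect{E_n^2}=\expect{(Y(t_n)+\bar Y(t_n))^2\,\Ey{t_n}^2}$. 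The plan is to split the factor $(Y(t_n)+\bar Y(t_n))^2$ off via Cauchy--Schwarz, so that $\expect{E_n^2}\le \sqrt{\expect{(Y(t_n)+\bar Y(t_n))^4}}\cdot\sqrt{\expect{\Ey{t_n}^4}}$, or alternatively via a conditional argument on $\mathcal{F}_{t_n}$. The second factor is the $L_2$ bound on the $Y$-error already in hand (at rate $\Dtmax^{1/2}$ for the square, i.e. $\Dtmax^{1/4}$ for the error itself), and the first factor should be controlled by moment bounds.

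\textbf{Key steps in order.} First I would convert the mesh-point bound \eqref{eq:L1meshpointbound} into a bound on $\expect{E_n^2\,\mathcal{I}_{\{N\ge n\}}}$, handling the indicator carefully because $N$ is random; here Assumption \ref{as:Nbounded} (both parts) is needed so that sums over $n$ up to $N_{\max}$ are finite and $\Dtmax N_{\max}\le T_{\max}$ controls accumulation. Second, I would supply the required fourth-moment control: $\expect{Y(t_n)^4}$ is bounded by Lemma \ref{lem:posMB} (with $p=4$, via \eqref{eq:equCondMp2} and the unconditional moment bound \eqref{eq:BossyDiopMomentBound}), while $\expect{\bar Y(t_n)^4}$ — or equivalently $\expect{X_n^2}$ — needs a second-moment bound on the \emph{scheme} analogous to Theorem \ref{lem:SchemeMoment}; I expect to either invoke or replicate the conditional-expectation telescoping argument of that theorem at the level of $X_n^2$ to get $\expect{X_n^2}\le C(1+X_0^2)$ uniformly. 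Third, having bounded $\expect{E_n^2}\le C\Dtmax^{1/2}$ uniformly over mesh points $n\le N_{\max}$, I would pass to the continuous interpolant: since $\mathcal{E}^2(s)$ is a convex combination of $E_n^2$ and $E_{n+1}^2$ on $[t_n,t_{n+1}]$, we have $\mathcal{E}^2(s)\le \max\{E_n^2,E_{n+1}^2\}\le E_n^2+E_{n+1}^2$ pointwise, so $\expect{\mathcal{E}^2(t)}\le 2\max_n\expect{E_n^2\mathcal{I}_{\{N\ge n+1\}}}\le \Cp{4}\Dtmax^{1/2}$, absorbing the factor into the constant.

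\textbf{Main obstacle.} The delicate point is the interaction between the random number of steps $N$ and the moment/error bounds: one cannot simply apply Cauchy--Schwarz pathwise and sum, because the terminal index is a stopping-time-driven random variable and the error process lives on a mesh whose length depends on the trajectory. The bound \eqref{eq:L1meshpointbound} already carries the indicator $\mathcal{I}_{\{N\ge n\}}$ precisely to handle this, so the real work is to verify that multiplying by the moment factor $(Y(t_n)+\bar Y(t_n))^2$ and taking expectations is compatible with these indicators — i.e. that $\expect{(Y(t_n)+\bar Y(t_n))^4\mathcal{I}_{\{N\ge n\}}}$ is bounded uniformly in $n$ and $\Dtmax$. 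Establishing the uniform second moment of the scheme $\expect{X_n^2}$ (not covered by Theorem \ref{lem:SchemeMoment}, which only gives the first moment) is where I expect the genuine computational effort to lie, since squaring \eqref{eq:scheme1step} introduces the term $(\sqrt{X_n+2\alpha\Dt}+\gamma\DW)^4$ whose conditional expectation must be shown to satisfy a Gronwall-type recursion closing at order $1+X_n^2$.
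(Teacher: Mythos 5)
Your factorisation $E_n = (Y(t_n)+\bar Y(t_n))\,\Ey{t_n}$ followed by Cauchy--Schwarz is the right move for the $L_1$ bound on the CIR error (and the paper uses exactly that observation, in one line, after Theorem \ref{lem:Yconv}), but it does not deliver the $L_2$ bound claimed in Theorem \ref{thm:main}. Squaring gives $\expect{E_n^2}=\expect{(Y(t_n)+\bar Y(t_n))^2\,\Ey{t_n}^2}$, and since $Y(t_n)$, $\bar Y(t_n)$ and $\Ey{t_n}$ are all $\mathcal{F}_{t_n}$-measurable, conditioning on $\mathcal{F}_{t_n}$ cannot separate the two factors; the only route is H\"older, which for any choice of exponents requires $\expect{|\Ey{t_n}|^{2q}}$ with $q>1$, in particular $\expect{\Ey{t_n}^4}\lesssim\Dtmax$ for your Cauchy--Schwarz version. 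No such $L_4$ error bound is available: Theorem \ref{lem:Yconv} is strictly an $L_2$ statement, and its proof leans on the inverse-moment bounds of Lemma \ref{lem:fin}, which under Assumption \ref{as:2invMom} cover only $p=1,2$. An $L_4$ error analysis of the transformed scheme would need $\expect{Y(s)^{-p}\,|\,\mathcal{F}_{t_n}}$ for $p$ up to $4$ and hence a strictly stronger parameter restriction than $\kappa\theta>\sigma^2$ --- the paper explicitly notes that the stated parameter range is dictated by controlling the first two inverse moments. This is the gap: your reduction replaces the target by a harder, unproved estimate.

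The paper's actual route avoids this entirely by working directly with the untransformed scheme: it subtracts \eqref{eq:scheme1step} from the variation-of-constants form \eqref{eq:CIRVoC2}, decomposes $E_{n+1}=A+B+C+D$ (deterministic drift mismatch, the $\Dt_{n+1}-\DW_{n+1}^2$ term, and a stochastic integral $D$), and estimates $\cexpect{E_{n+1}^2}$ term by term to obtain a Gronwall recursion for $\expect{\mathcal{E}^2(t)}$. The transformed-equation error enters only through the It\^o-isometry bound for $D$, via the single term $\tilde D_{3,n}\leq 2\Ey{t_n}^2$, so only the second moment $\expect{\Ey{t_n}^2\mathcal{I}_{\{N\geq n\}}}\leq \Cp{3}\Dtmax^{1/2}$ from \eqref{eq:L1meshpointbound} is ever needed. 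This also sidesteps the uniform second-moment bound $\expect{X_n^2}$ on the scheme that you correctly identify as missing and anticipate having to prove: the constants in the paper's recursion involve only moments of the exact solution (through Lemmas \ref{lem:MeanRevert}, \ref{lem:posMB} and \ref{lem:adap}), not of the numerical iterates. If you want to salvage your approach you would have to either prove the $L_4$ version of Theorem \ref{lem:Yconv} under a stronger parameter assumption, or abandon the factorisation and argue at the level of the $X$-equation as the paper does.
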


Since our proof relies upon Theorem \ref{lem:Yconv}, we inherit the constraint \eqref{eq:hmaxBound}, which is implied by \eqref{eq:hmaxBoundX}. 
If the mesh values are deterministic, the bound on the RHS of \eqref{eq:hmaxBoundX} becomes $\min\left\{1,\frac{1}{2\kappa}\right\}$.

\begin{proof}
Let $E_n$ be as given in Definition \ref{def:L2errorCns} and subtract the approximation \eqref{eq:scheme1step} from the variation of constants form of the true solution \eqref{eq:CIRVoC2}, evaluated at $t_n$, to get 
\begin{eqnarray}
E_{n+1} &=& e^{-\kappa\Dt_{n+1}} E_n 
    + \theta(1 - e^{-\kappa \Dt_{n+1}}) - e^{-\kappa \Dt_{n+1}} \kappa\theta \Dt_{n+1} \nonumber \\
&&    +e^{-\kappa \Dt_{n+1}} \sigma^2\left( \Dt_{n+1} -\DW_{n+1}^2\right)/4 \nonumber\\
    &&+ \sigma \int_{t_n}^{t_{n+1}} \left(e^{-\kappa(t_{n+1}-s)}\sqrt{X(s)}  - e^{-\kappa \Dt_{n+1}}
     \sqrt{X_n+2\alpha\Dt_{n+1}} \right)dW(s)\nonumber\\
     &=:& A + B + C + D,
     \label{eq:CIRABCD}
\end{eqnarray}

where
\begin{eqnarray*}
A&:=&e^{-\kappa\Dt_{n+1}}E_n;\\
B&:=&\theta(1-e^{-\kappa\Dt_{n+1}})-e^{-\kappa\Dt_{n+1}}\kappa\theta\Dt_{n+1};\\
C&:=&e^{-\kappa\Dt}\sigma^2(\Dt_{n+1}-\DW_{n+1}^2)/4;\\
D&:=& \sigma \int_{t_n}^{t_{n+1}} \left(e^{-\kappa(t_{n+1}-s)}\sqrt{X(s)}  - e^{-\kappa \Dt_{n+1}}
     \sqrt{X_n+2\alpha\Dt_{n+1}} \right)dW(s).
\end{eqnarray*}
Squaring both sides of \eqref{eq:CIRABCD} yields
$$E_{n+1}^2 = A^2 + 2AB + 2 AC + 2AD + B^2 + 2BC + 2BD+ C^2 + 2CD + D^2.$$
Taking $\mathcal{F}_{t_n}$-conditional expectations on both sides and using the bound $\Dt_{n+1}\leq \Dtmax$ in $A$ and $B$ we get that $\cexpect{AD}=\cexpect{BD}=0$.
So
\begin{multline}\label{eq:E2decomp}
\expect{E_{n+1}^2|\mathcal{F}_{t_n}} = \expect{A^2|\mathcal{F}_{t_n}} + 2\expect{AB|\mathcal{F}_{t_n}} + 2\expect{AC|\mathcal{F}_{t_n}}\\ + \expect{B^2|\mathcal{F}_{t_n}} + 2\expect{BC|\mathcal{F}_{t_n}} +
 2\expect{CD|\mathcal{F}_{t_n}}\\ + \expect{C^2|\mathcal{F}_{t_n}} + \expect{D^2|\mathcal{F}_{t_n}},\quad a.s.
\end{multline}
Now estimate \eqref{eq:E2decomp} term by term. First, write $A^2=e^{-2\kappa\Dt_{n+1}}E_n^2$. By \eqref{eq:hmaxBoundX}, $\Dtmax<1/(2\kappa)$, so there exists $\zeta\in[0,2\kappa\Dt_{n+1}]$ such that
$e^{-2\kappa\Dt_{n+1}}=1-2\kappa\Dt_{n+1}+\zeta^2/2$ 
and therefore
\begin{multline*}
\expect{A^2|\mathcal{F}_{t_n}} \leq (1-2\kappa\Dt_{n+1}+2\kappa^2\Dt_{n+1}^2)E_n^2\\=E_n^2 -2\kappa(1-\kappa\Dtmax)\Dt_{n+1}E_n^2,\quad a.s.
\end{multline*}
For $AB$ write $AB=e^{-\kappa\Dt_{n+1}}\theta(1-e^{-\kappa\Dt_{n+1}})E_n-e^{-2\kappa\Dt_{n+1}}\kappa\theta\Dt_{n+1}E_n$. Again by \eqref{eq:hmaxBoundX}, $\Dt_{n+1}<1/(2\kappa)<1/\kappa$, and therefore we can write, for some $\zeta\in[0,\kappa\Dt_{n+1}]$, 
\begin{equation*}
1-e^{-\kappa\Dt_{n+1}}(1+\kappa\Dt_{n+1})
=\kappa^2\Dt_{n+1}^2-\zeta^2/2-\kappa\Dt_{n+1}\zeta^2/2\leq \kappa^2\Dt_{n+1}^2.
\end{equation*}
Therefore 
\begin{eqnarray*}
\cexpect{AB}& = & e^{-\kappa\Dt_{n+1}}\theta\left(1-e^{-\kappa\Dt_{n+1}}(1+\kappa\Dt_{n+1}) \right)E_n\\
& \leq & e^{-\kappa\Dt_{n+1}}\theta\kappa^2\Dt_{n+1}^2|E_n|\\
& \leq & \frac{\theta}{2}\kappa^2(\Dt_{n+1}^3+\Dt_{n+1}E_n^2),\quad a.s.
\end{eqnarray*}
where we used $2ab\leq a^2 + b^2$ in the last inequality. 
For $AC$ we have
$$
\expect{AC|\mathcal{F}_{t_n}}=e^{-2\kappa\Dt_{n+1}}\frac{\sigma^2}{4}E_n\expect{\Dt_{n+1}-\DW_{n+1}^2|\mathcal{F}_{t_n}}=0,\quad a.s.
$$
For $B^2$ we have
\begin{eqnarray*}
\expect{B^2|\mathcal{F}_{t_n}}&=&\theta^2(1-e^{-\kappa\Dt_{n+1}})^2+e^{-2\kappa\Dt_{n+1}}\kappa^2\theta^2\Dt_{n+1}^2\\
&&-2\theta(1-e^{-\kappa\Dt_{n+1}})e^{-\kappa\Dt_{n+1}}\kappa\theta\Dt_{n+1}\\
&\leq&\theta^2(\kappa\Dt_{n+1}+\kappa^2\Dt_{n+1}^2/2)^2+e^{-2\kappa\Dt_{n+1}}\kappa^2\theta^2\Dt_{n+1}^2\\
&&\qquad\qquad-2\theta(\kappa\Dt_{n+1}+\kappa^2\Dt_{n+1}^2/2)e^{-\kappa\Dt_{n+1}}\kappa\theta\Dt_{n+1}\\
& = & \frac{\kappa^4\theta^2}{2}\Dt_{n+1}^4\left(\frac{9}{2} + 3 \kappa \Dt_{n+1}  + \frac{1}{2} \kappa^2 \Dt_{n+1}^2 \right), \quad a.s.
\end{eqnarray*}
For $BC$ we see that 
\begin{multline*}
BC=\frac{1}{4}\left(e^{-\kappa\Dt_{n+1}}\theta(1-e^{-\kappa\Dt_{n+1}})\sigma^2(\Dt_{n+1}-\DW_{n+1}^2)\right.\\
\left.-e^{-2\kappa\Dt_{n+1}}\kappa\theta\sigma^2(\Dt_{n+1}-\DW_{n+1}^2)\right),
\end{multline*}
from which it follows that $\expect{BC|\mathcal{F}_{t_n}}=0$ a.s.

For $C^2=\frac{1}{16}e^{-2\kappa\Dt_{n+1}}\sigma^4(\Dt_{n+1}-\DW_{n+1}^2)^2$ it follows that 
\begin{equation*}
\cexpect{C^2}=e^{-2\kappa\Dt_{n+1}}\sigma^4\Dt^2\leq\sigma^4\Dt^2,\quad a.s.
\end{equation*}

For $D^2$ we apply the It\^o isometry in its conditional form (see \cite{Mao}) to get a.s,
\begin{multline}\label{eq:dot1}
\expect{D^2|\mathcal{F}_{t_n}}\\
=\sigma^2\int_{t_n}^{t_{n+1}}\cexpect{\left(e^{-\kappa(t_{n+1}-s)}\sqrt{X(s)}-e^{-\kappa\Dt_{n+1}}\sqrt{X_n+2\alpha\Dt_{n+1}}\right)^2}ds.
\end{multline}
To the integrand on the RHS of \eqref{eq:dot1} we apply the following a.s. bound
\begin{eqnarray}\label{eq:Ctilde}
\lefteqn{\cexpect{\left( e^{-\kappa(t_{n+1}-s)}\sqrt{X(s)}-e^{-\kappa\Dt_{n+1}}\sqrt{X_n+2\alpha\Dt_{n+1}}\right)^2} } \nonumber\\ 
& \leq &
2 (e^{-\kappa(t_{n+1}-s)}-e^{-\kappa\Dt_{n+1}})^2 \cexpect{X(s)} \nonumber \\
 & & + 
 2e^{-2\kappa\Dt_{n+1}}\cexpect{\left(\sqrt{X(s)}-\sqrt{X(t_n)}\right)^2}\nonumber \\
& & + 
2e^{-2\kappa\Dt_{n+1}}\left(\sqrt{X(t_n)}-\sqrt{X_n}\right)^2\nonumber \\
& & + 
2e^{-2\kappa\Dt_{n+1}}\left(\sqrt{X_n}-\sqrt{X_n+2\alpha\Dt_{n+1}}\right)^2 \nonumber \\
& =: & \tilde{D}_{1,n} + 
\tilde{D}_{2,n} +
\tilde{D}_{3,n} + 
\tilde{D}_{4,n}.\nonumber
\end{eqnarray}
Bounding each in turn, we first apply \eqref{eq:MeanRevert} in the statement of Lemma \ref{lem:MeanRevert} to get:
\[
\tilde{D}_{1,n}\leq 4\kappa\Dt_{n+1}^2(X(t_n)+\theta),\quad a.s.
\]
Second, apply \eqref{eq:Yreg} in the statement of Lemma \ref{lem:adap} to get
\[
\tilde{D}_{2,n}\leq 8\gamma^2\Dt_{n+1}+2\bar L_{n}\Dt_{n+1}^2,\quad a.s.
\]
Third, note that $\tilde{D}_{3,n}\leq 2\Ey{t_n}^2$, where $E_Y$ is defined by \eqref{eq:EyEyO} in the statement of Theorem \ref{lem:Yconv}. Fourth, we multiply out the square in $\tilde{D}_{4,n}$ to get the bound
\[
\tilde{D}_{4,n}\leq 4\alpha\Dt_{n+1}.
\]

Bringing $\tilde{D}_{1,n},\tilde{D}_{2,n},\tilde {D}_{3,n}, \tilde{D}_{4,n}$ together we get
\begin{eqnarray}
\cexpect{D^2}&\leq&\sigma^2\Dt_{n+1}^{3/2}\left[4\kappa\Dt_{n+1}^{3/2}(X(t_n)+\theta)+8\gamma^2\Dt_{n+1}^{1/2}\right.\nonumber\\
&&\left.\qquad\qquad+2\bar L_{n}\Dt_{n+1}^{3/2}+4\alpha\Dt_{n+1}^{1/2}\right]+2\sigma\Ey{t_n}^2\Dt_{n+1}\nonumber\\
&\leq& \bar K_{5,n}\Dt_{n+1}^{3/2}+2\sigma\Ey{t_n}^2\Dt_{n+1},\quad a.s,\label{eq:D2bound} 
\end{eqnarray}
where using the fact that \eqref{eq:hmaxBoundX} ensures $\Dt_{n+1}\leq 1$, we define the random variable $\bar K_{5,n}:=\sigma^2[4\kappa(X(t_n)+\theta)+8\gamma^2+2\bar L_{n}+4\alpha]$. For $CD$ we use the bound from $D^2$ to get a.s,
\begin{multline*}
\expect{CD|\mathcal{F}_{t_n}}=\frac{1}{4}e^{-\kappa\Dt_{n+1}}\sigma^3\mathbb{E}\left[(\Dt_{n+1}-\DW_{n+1}^2)\right.\\\left.
\times\int_{t_n}^{t_{n+1}}\left(e^{-\kappa(t_{n+1}-s)}\sqrt{X(s)}-e^{-\kappa\Dt_{n+1}}\sqrt{X_n+2\alpha\Dt_{n+1}}\right)dW(s)|\mathcal{F}_{t_n}\right].
\end{multline*}
Then applying the Cauchy-Schwarz inequality we get
\begin{eqnarray*}
\expect{CD|\mathcal{F}_{t_n}}
& \leq& \sqrt{\expect{(\Dt_{n+1}-\DW_{n+1}^2)^2|\mathcal{F}_{t_n}}}\sqrt{\expect{D^2|\mathcal{F}_{t_n}}}
\nonumber \\
&=&\Dt_{n+1}\sqrt{2\expect{D^2|\mathcal{F}_{t_n}}}\\
&\leq& \Dt_{n+1}^{3/2}\sqrt{2\bar{K}_{5,n}\Dt_{n+1}^{1/2}+4\sigma\Ey{t_n}^2}\\
&\leq& \sqrt{2\bar{K}_{5,n}}\Dt_{n+1}^{7/4}+\sqrt{4\sigma \Ey{t_n}^2}\Dt_{n+1}^{3/2},\quad a.s,
\end{eqnarray*}
making use of the bound on $\cexpect{D^2}$ given by \eqref{eq:D2bound}. By the standard inequality $2ab\leq a^2 + b^2$ we find 
$$
\expect{CD|\mathcal{F}_{t_n}}\leq 
\sqrt{2\bar{K}_{5,n}}\Dt_{n+1}^{7/4}+2\sigma \Ey{t_n}^2\Dt_{n+1} + \frac{1}{2}\Dt_{n+1}^{2},\quad a.s.
$$

Substituting estimates of $A^2, AB, AC, B^2, BC, CD, C^2$ and $D^2$ into \eqref{eq:E2decomp} gives
\begin{multline*}
\cexpect{E_{n+1}^2}\leq E_n^2-2\kappa(1-\kappa\Dtmax)\Dt_{n+1}E_n^2+\Dt_{n+1}E_{n}^2\frac{\theta\kappa^2}{2}\\
+\Dt_{n+1}^3\frac{\theta\kappa^2}{2}+\frac{\kappa^4\theta^2}{2}\Dt_{n+1}^4\left(\frac{9}{2} + 3 \kappa \Dt_{n+1}  + \frac{1}{2} \kappa^2 \Dt_{n+1}^2 \right)\\+2\sqrt{2\bar K_{5,n}}\Dt_{n+1}^{7/4}+\left(\sigma^4+\frac{1}{2}\right)\Dt_{n+1}^2+\bar K_{5,n}\Dt_{n+1}^{3/2}+4\sigma\Ey{t_n}^2\Dt_{n+1},\quad a.s.
\end{multline*}
Rearranging terms and again using that, by \eqref{eq:hmaxBoundX} $\Dtmax< 1$, we get
\begin{eqnarray}
\cexpect{E_{n+1}^2}
& \leq &  \left(1-2\kappa(1-\kappa\Dtmax)\Dt_{n+1}+\Dt_{n+1}\frac{\theta\kappa^2}{2}\right)E_n^2 \nonumber \\
&&+\Dt_{n+1}^3\frac{\theta\kappa^2}{2}+\frac{\kappa^4\theta^2}{2}\Dt_{n+1}^4\left(\frac{9}{2} + 3 \kappa \Dt_{n+1}  + \frac{1}{2} \kappa^2 \Dt_{n+1}^2 \right)\nonumber \\
&&+2\sqrt{2\bar K_{5,n}}\Dt_{n+1}^{7/4}+\left(\sigma^4+\frac{1}{2}\right)\Dt_{n+1}^2\nonumber\\
&&+\bar K_{5,n}\Dt_{n+1}^{3/2}+4\sigma\Ey{t_n}^2\Dt_{n+1}\nonumber \\
&\leq&\left(1-\Dt_{n+1}\left(2\kappa(1-\kappa)+\frac{\theta\kappa^2}{2}\right)\right)E_n^2\nonumber
\nonumber \\
&&+\Dt_{n+1}^2\left(\frac{\theta\kappa^2}{2}+\frac{\kappa^4\theta^2}{2}\left(\frac{9}{2} + 3 \kappa  + \frac{1}{2} \kappa^2\right) + \sigma^4+\frac{3}{2} \right)\nonumber\\
&&+3\bar K_{5,n}\Dt_{n+1}^{3/2}
+4\sigma\Ey{t_n}^2\Dt_{n+1},\quad a.s.
\label{eq:LampertiPreGronwall}
\end{eqnarray}

%From here, we follow a variation of the adaptive Gronwall argument used in the proof of Theorem \ref{lem:SchemeMoment}. 
Rearranging \eqref{eq:LampertiPreGronwall} and multiplying both sides by the indicator random variable $\mathcal{I}_{\{N^{(t)}\geq n+1\}}$ gives a.s.
\begin{multline}\label{eq:Endropped}
\cexpect{E_{n+1}^2\mathcal{I}_{\{N^{(t)}\geq n+1\}}}-E_n^2\mathcal{I}_{\{N^{(t)}\geq n+1\}}\\\leq
\Dt_{n+1}\left(2\kappa|1-\kappa|+\frac{\theta\kappa^2}{2}\right)E_n^2\mathcal{I}_{\{N^{(t)}\geq n+1\}}\\+
\Dt_{n+1}^2\left(\frac{\theta\kappa^2}{2}+\frac{\kappa^4\theta^2}{2}\left(\frac{9}{2} + 3 \kappa  + \frac{1}{2} \kappa^2\right) + \sigma^4+\frac{3}{2} \right)\mathcal{I}_{\{N^{(t)}\geq n+1\}}\\+3\bar K_{5,n}\Dt_{n+1}^{3/2}\mathcal{I}_{\{N^{(t)}\geq n+1\}}+4\sigma\Ey{t_n}^2\Dt_{n+1}\mathcal{I}_{\{N^{(t)}\geq n+1\}} =:V_n.
\end{multline}
It follows from Definition \ref{def:L2errorCns} that
$(t_{n+1}-s)E_n^2\leq\Dt_{n+1}\mathcal{E}^2(s)$, a.s. for $s\in[t_n,t_{n+1}]$
and therefore by integration
\begin{equation}\label{eq:EnboundL2}
\Dt_{n+1}E_n^2\leq 2\int_{t_n}^{t_{n+1}}\mathcal{E}^2(s)ds,\quad a.e.    
\end{equation}
%As was the case for $(\mathcal{X}(s))_{s\in[0,T]}$ in the proof of Theorem \ref{lem:SchemeMoment}, t
The a.s. continuity of $(\mathcal{E}^2(s))_{s\in [0,T]}$ implies the continuity and therefore boundedness over $[0,T]$ of $\expect{\mathcal{E}^2(t)}$. 

Summing the LHS of \eqref{eq:Endropped} over all steps, and taking expectations yields, %(just as for \eqref{eq:DCTcond} in the proof of Lemma \ref{lem:Yconv}), 
\begin{eqnarray}
\lefteqn{\expect{\sum_{n=0}^{N^{(t)}-1}\left(\cexpect{E_{n+1}^2\mathcal{I}_{\{N^{(t)}\geq n+1\}}}-E_n^2\mathcal{I}_{\{N^{(t)}\geq n+1\}}\right)}}\nonumber\\
&=&\sum_{n=0}^{N_{\max}-1}\expect{\left(E_{n+1}^2\mathcal{I}_{\{N^{(t)}\geq n+1\}}-E_n^2\mathcal{I}_{\{N^{(t)}\geq n+1\}}\right)}\nonumber\\
&=&\expect{\sum_{n=0}^{N^{(t)}-1}\left(E_{n+1}^2-E_n^2\right)}=\expect{E_{N^{(t)}}^2},\label{eq:LHSE}
\end{eqnarray}
where we have used Assumption \ref{as:Nbounded} Part A, and that $E_0^2=0$ at the final step.

Summing the RHS of \eqref{eq:Endropped} over all steps, taking expectations, and applying \eqref{eq:EnboundL2} yields the finite estimate
\begin{align}
\expect{\sum_{n=0}^{N^{(t)}-1} V_n}
&\leq  \left(4\kappa|1-\kappa|+\theta\kappa^2\right)\expect{\int_{0}^{t_{N^{(t)}}}\mathcal{E}^2(s)ds}\nonumber \\
&\quad+T\left(\frac{\theta\kappa^2}{2}+\frac{\kappa^4\theta^2}{2}\left(\frac{9}{2} + 3 \kappa  + \frac{1}{2} \kappa^2\right) + \sigma^4+\frac{3}{2} \right)\Dtmax\nonumber\\
&\quad+3TK_5\Dtmax^{1/2}+4\sigma N_{\max}\Dtmax^{3/2}\Cp{3},\label{eq:RHSE}
\end{align}
where $K_5=\expect{\bar K_{5,n}}<\infty$ and we have used \eqref{eq:L1meshpointbound} in the statement of Theorem \ref{lem:Yconv} at the final step, and the fact that $\mathcal{I}_{\{N^{(t)}\geq n+1\}}\leq \mathcal{I}_{\{N^{(t)}\geq n\}}$ a.s. 

By \eqref{eq:TmaxDef} in Assumption \ref{as:Nbounded} Part B, and substituting \eqref{eq:LHSE}, \eqref{eq:RHSE} into \eqref{eq:Endropped} we get
\begin{multline*}
\expect{E_{N^{(t)}}^2}\leq \left(4\kappa|1-\kappa|+\theta\kappa^2\right)\left(\expect{\int_{0}^{t}\mathcal{E}^2(s)ds}+\expect{\int_{t}^{t_{N^{(t)}}}\mathcal{E}^2(s)ds}\right)\\
+T\left(\frac{\theta\kappa^2}{2}+\frac{\kappa^4\theta^2}{2}\left(\frac{9}{2} + 3 \kappa  + \frac{1}{2} \kappa^2\right) + \sigma^4+\frac{3}{2} \right)\Dtmax\\
+3TK_5\Dtmax^{1/2}+4\sigma T_{\max}\Cp{3}\Dtmax^{1/2}.
\end{multline*}
A similar argument, along with the a.s. positivity of $\mathcal{E}^2(s)$, gives
\begin{multline*}
\expect{E_{N^{(t)}-1}^2}\leq \left(4\kappa|1-\kappa|+\theta\kappa^2\right)\expect{\int_{0}^{t}\mathcal{E}^2(s)ds}\\
+T\left(\frac{\theta\kappa^2}{2}+\frac{\kappa^4\theta^2}{2}\left(\frac{9}{2} + 3 \kappa  + \frac{1}{2} \kappa^2\right) + \sigma^4+\frac{3}{2} \right)\Dtmax\\
+3TK_5\Dtmax^{1/2}+4\sigma T_{\max}\Cp{3}\Dtmax^{1/2}.
\end{multline*}
%As in the proof of Lemma \ref{lem:Yconv}, 
Definition \ref{def:L2errorCns} implies that 
$$\mathcal{E}^2(s)\leq\max\left\{E_{N^{(t)}-1}^2,E_{N^{(t)}}^2\right\} \leq E_{N^{(t)}-1}^2+E_{N^{(t)}}^2
$$ 
for all $s\in[t_{N^{(t)}-1},t_{N^{(t)}}]$ a.e.  This means we can write
\begin{multline*}
\expect{E_{N^{(t)}}^2}+\expect{E_{N^{(t)}-1}^2}\leq
2K_6\expect{\int_{0}^{t}\mathcal{E}^2(s)ds}
+K_7\Dtmax+3TK_5\Dtmax^{1/2}\\
+K_6\Dtmax\left(\expect{E_{N^{(t)}}^2}+\expect{E_{N^{(t)}-1}^2}\right)+4\sigma T_{\max}\Cp{3}\Dtmax^{1/2},
\end{multline*}
where $K_6:=4\kappa|1-\kappa|+\theta\kappa^2$ and $K_7:=T\left(\theta\kappa^2+\kappa^4\theta^2\left(9 + 6 \kappa  +  \kappa^2\right) + 2\sigma^4+3\right)$. Rearranging, we get
\begin{multline*}
    \expect{E_{N^{(t)}}^2}+\expect{E_{N^{(t)}-1}^2}
    \leq
\frac{1}{1-K_6\Dtmax}\left(2K_6\expect{\int_{0}^{t}\mathcal{E}^2(s)ds}\right.\\
\left.
+K_7\Dtmax+(3TK_5+4\sigma T_{\max}\Cp{3})\Dtmax^{1/2}\right).
\end{multline*}
By \eqref{eq:hmaxBoundX}, $\Dtmax<1/K_6=1/(4\kappa|1-\kappa|+\theta\kappa^2)$, and setting $K_8:=K_7\vee (3TK_5+4\sigma T_{\max}\Cp{3})$ we write
\begin{eqnarray*}
\expect{\mathcal{E}^2(t)}
    &\leq&
\frac{1}{1-K_6\Dtmax}\left(2K_6\expect{\int_{0}^{t}\mathcal{E}^2(s)ds}\right.\\
&&\left.+K_7\Dtmax+(3TK_5+4\sigma T_{\max}\Cp{3})\Dtmax^{1/2}\right)\\
&\leq& 2K_6\int_{0}^{t}\expect{\mathcal{E}^2(s)}ds+K_8\Dtmax^{1/2}.
\end{eqnarray*}
An application of Gronwall's inequality gives
\[
\expect{\mathcal{E}^2(t)}\leq K_8e^{2TK_6}\Dtmax^{1/2},
\]
from which the statement of the theorem follows with $\Cp{4}=K_8e^{2TK_6}$.
\end{proof}

\subsection{A local error bound for the scheme in the \emph{soft zero} region}\label{sec:LEBSZ}
Finally, we show that one step in the {\em soft zero region} preserves the local error from \eqref{eq:scheme1step}, that is the {\em soft zero} does not change the local error rate (and hence should not change the global error rate). 

\begin{lemma}\label{lem:softzeroLocalError}
Let $(X(t))_{t\in[0,T]}$ be a solution of \eqref{eq:CIR}, $X_n < \Xzero$, $X_{n+1}=\Xzero$ and $\Dt_{n+1}$ be given by \eqref{eq:dtsoftzero}. Suppose that either of Assumptions \ref{as:alphaPos} or \ref{as:alphaNeg} holds, and suppose also that for $r,s>0$
\begin{equation}\label{eq:expectassum}
\expect{|X(t_n)-X_n|}\leq \tilde C_n\Dtmax^r
\quad \text{and} \quad \expect{(X(t_n)-X_n)^2} \leq \tilde C_n\Dtmax^s.
\end{equation}
Then there exist constants $\tilde C_n,\tilde C_{n+1}>0$, independent of $\Dtmax$, such that 
$$\expect{(X(t_{n+1})-\Xzero)^2}\leq \tilde C_{n+1}\Dtmax^{\min(s,r+1)}.$$ 
\end{lemma}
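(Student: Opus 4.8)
The plan is to write the local error $X(t_{n+1})-\Xzero$ as an $\mathcal{F}_{t_n}$-measurable term carrying the incoming error plus a mean-zero stochastic term, show the cross term vanishes, and then bound the two remaining pieces. Since $\Xzero=X_{n+1}=u(t_{n+1})$ is produced by the exact deterministic flow started at $u(t_n)=X_n$, writing $X(t_{n+1})$ in variation-of-constants form \eqref{eq:CIRVoC2} over $[t_n,t_{n+1}]$ and subtracting the closed form of $u(t_{n+1})$ makes the deterministic parts $\theta(1-e^{-\kappa\Dt_{n+1}})$ cancel exactly, leaving
$$
X(t_{n+1}) - \Xzero = \underbrace{e^{-\kappa\Dt_{n+1}}\big(X(t_n)-X_n\big)}_{=:P} + \underbrace{\sigma\int_{t_n}^{t_{n+1}} e^{-\kappa(t_{n+1}-s)}\sqrt{X(s)}\,dW(s)}_{=:Q}.
$$
I would then argue $\expect{PQ}=0$: factoring $e^{-\kappa(t_{n+1}-s)}=e^{-\kappa t_{n+1}}e^{\kappa s}$ pulls the $\mathcal{F}_{t_n}$-measurable quantity $e^{-\kappa t_{n+1}}$ out of $Q$, and the remaining integral is a martingale increment evaluated at the bounded $\mathcal{F}_{t_n}$-stopping time $t_{n+1}$, so $\cexpect{Q}=0$ by optional sampling (exactly as $\cexpect{AD}=0$ is used in the proof of Theorem \ref{thm:main}). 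Since $P$ is $\mathcal{F}_{t_n}$-measurable, $\expect{PQ}=\expect{P\cexpect{Q}}=0$, hence $\expect{(X(t_{n+1})-\Xzero)^2}=\expect{P^2}+\expect{Q^2}$.

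The term $P^2$ is immediate: as $e^{-2\kappa\Dt_{n+1}}\leq 1$ pathwise, the second bound in \eqref{eq:expectassum} gives $\expect{P^2}\leq\expect{(X(t_n)-X_n)^2}\leq\tilde C_n\Dtmax^s$, supplying the $\Dtmax^s$ part of the rate. All the work is in showing $\expect{Q^2}=O(\Dtmax^{r+1})$. The first ingredient is $\Dt_{n+1}=O(\Dtmax)$: from \eqref{eq:dtsoftzero} and $0\leq X_n<\Xzero<\theta$,
$$
\Dt_{n+1} = \frac{1}{\kappa}\log\frac{\theta - X_n}{\theta - \Xzero} \leq \frac{1}{\kappa}\log\!\left(1+\frac{\Xzero}{\theta-\Xzero}\right) \leq \frac{1}{\kappa}\,\frac{\Xzero}{\theta-\Xzero},
$$
and then $\Xzero\leq\rho^{-1}2\kappa\theta\Dtmax$ together with $\theta-\Xzero\geq\theta(1-\rho^{-1})$ yields $\Dt_{n+1}\leq\frac{2}{\rho-1}\Dtmax$.

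Next, applying the conditional \Ito isometry to $Q$ and the mean-reversion identity \eqref{eq:PREMeanRevert} in the sharper form $\cexpect{X(s)}\leq X(t_n)+\theta\kappa\Dt_{n+1}$ (valid for $s-t_n\leq\Dt_{n+1}$ using $X(t_n)\geq 0$ and $1-e^{-x}\leq x$), I obtain
$$
\cexpect{Q^2} = \sigma^2\!\int_{t_n}^{t_{n+1}} e^{-2\kappa(t_{n+1}-s)}\cexpect{X(s)}\,ds \leq \sigma^2\,\Dt_{n+1}\big(X(t_n)+\theta\kappa\Dt_{n+1}\big),
$$
so $\expect{Q^2}\leq\sigma^2\Dt_{n+1}\expect{X(t_n)}+\sigma^2\theta\kappa\,\expect{\Dt_{n+1}^2}$. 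The second summand is $O(\Dtmax^2)$. For the first, being in the soft-zero region forces $X(t_n)$ to be small on average: using $X(t_n)\leq|X(t_n)-X_n|+X_n$, the first bound in \eqref{eq:expectassum}, and $X_n<\Xzero=O(\Dtmax)$ gives $\expect{X(t_n)}\leq\tilde C_n\Dtmax^r+\Xzero=O(\Dtmax^r)$ in the relevant regime $r\leq 1$ (where $\Dtmax\leq\Dtmax^r$). Multiplying by $\Dt_{n+1}=O(\Dtmax)$ yields $O(\Dtmax^{r+1})$, which dominates the $O(\Dtmax^2)$ remainder, and combining with $\expect{P^2}=O(\Dtmax^s)$ gives the stated bound $\expect{(X(t_{n+1})-\Xzero)^2}\leq\tilde C_{n+1}\Dtmax^{\min(s,r+1)}$.

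The main obstacle — and the step that produces the ``$+1$'' in the rate — is the coupling in the last paragraph between the mean-reversion bound and the incoming $L_1$ consistency assumption. One must recognise that the naive bound $\cexpect{X(s)}\leq X(t_n)+\theta$ is too lossy (it gives only $O(\Dtmax)$) and instead extract \emph{both} the factor $\Dt_{n+1}=O(\Dtmax)$ from the shortness of the interval \emph{and} the factor $\Dtmax^{r}$ from the smallness of $\expect{X(t_n)}$ in the soft-zero region. Care is also needed with the implicit conditioning on $\{X_n<\Xzero\}$, handled cleanly by carrying the indicator $\mathcal{I}_{\{X_n<\Xzero\}}$ throughout (under which $\expect{X_n\mathcal{I}_{\{X_n<\Xzero\}}}\leq\Xzero$), and with justifying the conditional \Ito isometry over the random interval $[t_n,t_{n+1}]$, which follows from the same optional-sampling argument already used for the cross term.
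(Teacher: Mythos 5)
Your proof is correct, and it takes a genuinely different route from the paper's. The paper does not exploit the exact cancellation you identify: it writes $\Xzero = X_n+\kappa\int_{t_n}^{t_{n+1}}(\theta-X_n)\,ds$ and subtracts this from the integral form \eqref{eq:CIRint} of $X(t_{n+1})$, leaving a residual drift term $-\kappa\int_{t_n}^{t_{n+1}}(X(s)-X_n)\,ds$; it then squares with $(a+b+c)^2\le 3(a^2+b^2+c^2)$ and Jensen, and must control $\cexpect{(X(s)-X(t_n))^2}$ via the moment bound \eqref{eq:equCondMp1}, which contributes an extra $O(\Dtmax^2)$ term with constant $24\kappa^2 M_{1,2}(1+M_2)$. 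Your exact-flow subtraction makes the drift difference collapse into the factor $e^{-\kappa\Dt_{n+1}}$ multiplying the incoming error, and the orthogonality $\expect{PQ}=0$ replaces the factor $3$ on $\expect{E_n^2}$ by $1$, so you avoid both the H\"older-type estimate and the cruder algebraic inequality. The decisive step is the same in both arguments: the $\Dtmax^{r+1}$ contribution comes from pairing the $L_1$ assumption (together with $X_n<\Xzero=O(\Dtmax)$) with the extra factor $\Dt_{n+1}=O(\Dtmax)$ produced by the It\^o isometry and the mean-reversion bound on $\cexpect{X(s)}$ --- the paper's counterpart of your $\sigma^2\Dt_{n+1}\expect{X(t_n)}$ term is $3\gamma^2\expect{|E_n|}\Dtmax+3\gamma^2(\Xzero+\theta\Dtmax)\Dtmax$. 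Your explicit verification that \eqref{eq:dtsoftzero} gives $\Dt_{n+1}\le\tfrac{2}{\rho-1}\Dtmax$ is a detail the paper leaves implicit. The only caveat --- shared with the paper's own proof --- is that the $O(\Dtmax^2)$ remainders cap the attainable exponent at $2$, so the stated exponent $\min(s,r+1)$ is literally recovered only for $r\le 1$ and $s\le 2$, precisely the regime you flag.
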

\begin{proof}
By construction,
$$X_{n+1}=\Xzero=X_n + \kappa \int_{t_n}^{t_{n+1}} (\theta - X_n) ds.$$
Define $\Eonestep_{n+1}:=X(t_{n+1})-\Xzero$ and 
$\Eonestep_n:=X(t_n)-X_n$ 
then 
$$
\Eonestep_{n+1} = \Eonestep_n - \kappa \int_{t_n}^{t_{n+1}} (X(s)-X_n) ds + \gamma\int_{t_n}^{t_{n+1}} \sqrt{X(s)} dW(s).
$$
Squaring both sides, using the elementary inequality $(a+b+c)^2 \leq 3a^2 + 3b^2+3c^2$ and a standard corollary of Jenzen's inequality,
 \begin{equation*}
\Eonestep_{n+1}^2 \leq   3\Eonestep_n^2 
 + 3\kappa^2\Dt_{n+1}\int_{t_n}^{t_{n+1}} (X(s)-X_n)^2 ds 
+ 3\gamma^2 \left(\int_{t_n}^{t_{n+1}} \sqrt{X(s)} dW(s)\right)^2.
\end{equation*}
Adding in and subtracting out $X(t_n)$ in the first integral, then taking conditional expectation and applying the It\^o Isometry, again in its conditional form, we get
 \begin{multline*}
\cexpect{\Eonestep_{n+1}^2} \leq   3\Eonestep_n^2(1+2\kappa^2\Dt_{n+1}^2)
 \\+ 6\kappa^2\Dt_{n+1}\int_{t_n}^{t_{n+1}} \cexpect{(X(s)-X(t_n))^2} ds  
\\ + 3\gamma^2 \int_{t_n}^{t_{n+1}} \cexpect{X(s)} ds,\quad a.s.
\end{multline*}
Now by the bound \eqref{eq:PREMeanRevert} in the statement of Lemma \ref{lem:MeanRevert} we find
 \begin{multline*}
\cexpect{\Eonestep_{n+1}} \leq   3\Eonestep_n^2(1+2\kappa^2\Dt_{n+1}^2)\\
 + 6\kappa^2\Dt_{n+1}\int_{t_n}^{t_{n+1}} \cexpect{(X(s)-X(t_n))^2} ds  
\\ + 3\gamma^2 \Dt_{n+1}(X(t_n)+\theta\Dt_{n+1}),\quad a.s.
\end{multline*}
For the last term we add in and subtract out $X_n$ and use that $X_n\leq \Xzero$ 
\begin{multline*}
\cexpect{\Eonestep_{n+1}^2}\\ \leq   3\Eonestep_n^2(1+2\kappa^2\Dt_{n+1}^2) 
 + 6\kappa^2\Dt_{n+1}\int_{t_n}^{t_{n+1}} \cexpect{(X(s)-X(t_n))^2} ds  \\
 + 3\gamma^2 \Dt_{n+1}\Eonestep_n+ 3\gamma^2( \Xzero+\theta\Dt_{n+1})\Dt_{n+1},\quad a.s.
\end{multline*}
For the second term it suffices to use the bound \eqref{eq:equCondMp1} in Lemma \ref{lem:posMB} to get 
\begin{multline*}
\cexpect{\Eonestep_{n+1}^2} \leq   3\Eonestep_n^2(1+2\kappa^2\Dt_{n+1}^2)
 + 24\kappa^2\Dt_{n+1}^2M_{1,2}(1+X(t_n)^2)  \\
 + 3\gamma^2 \Dt_{n+1}\Eonestep_n+ 3\gamma^2( \Xzero+\theta\Dt_{n+1})\Dt_{n+1},\quad a.s.
\end{multline*}
Since $\Xzero\leq \rho^{-1}\theta\Dtmax$ we have, after taking the expectation and using \eqref{eq:BossyDiopMomentBound} with $p=1$, 
\begin{multline*}
\expect{\Eonestep_{n+1}^2} \leq   3\expect{\Eonestep_n^2}(1+2\kappa^2\Dtmax^2)
 + 24\kappa^2(M_{1,2}(1+M_2))\Dtmax^2 \\ 
 + 3\gamma^2 \expect{\Eonestep_n}\Dtmax+ 3\gamma^2\theta( \rho^{-1}+1)\Dtmax^2.
\end{multline*}
An application of the bounds \eqref{eq:expectassum} yields the result.
\end{proof}

\section{Numerical results}
\label{sec:numres}
We compare our new splitting/adaptive method to four other methods in the literature. We let $\Dt$ denote a fixed time step and $\Delta W_{n+1}$ be an increment of the Brownian motion.
As a reference solution we take 
the Milstein method of \cite{HefterHerzwurm2018}, which is known to converge strongly over all $\alpha>0$. For \eqref{eq:CIR} this is 
\begin{align}
     R_1 := & \max\left\{\sigma\sqrt{\Dt}/2,\sqrt{\max\{\sigma^2\Dt/4,X_n\}}+\frac{\sigma}{2}\DW_{n+1}\right\};\nonumber \\
     X_{n+1} = & \max\left\{R_1^2+ \Dt(\kappa(\theta-X_n)-\sigma^2/4),0  \right\}.\nonumber 
\end{align}
In our figures below we denote this method as {\tt Milstein}. 
We also consider the fully truncated method
(denoted {\tt Fully Truncated}) proposed by \cite{LordKoekkeokvanDijk2010} given by 
\begin{eqnarray*}
\widetilde{X}_{n+1}&=&\widetilde{X}_n+\Dt\kappa(\theta-\max\{\widetilde{X}_n,0\})+\sigma\sqrt{\max\{\widetilde{X}_n,0\}}\Delta W_{n+1};\\
X_{n+1}&=&\max\{ \widetilde{X}_{n+1}, 0 \}.
\end{eqnarray*}
Both these approaches maintain non-negativity of the numerical approximation by enforcing that the approximated solution is always greater or equal to zero. For larger $\Dt$ values this may then be enforced over a few consecutive steps. This is in contrast to 
the {\em soft zero} approach that we take for $\alpha<0$, ($\sigma>2\sqrt{\kappa\theta}$) that, we believe, better mimics the dynamics of the underlying SDE \eqref{eq:CIR}, (see below). 

The last two methods we compare to are
 derived from the Lamperti transformation.  First is the drift implicit method (denoted {\tt Implicit}) of \cite{Alfonsi2013} 
\begin{equation*}
    Y_{n+1}=\frac{Y_n+\gamma\Delta W_{n+1}}{2(1-\beta \Dt)}+\sqrt{\frac{(Y_n+\gamma\Delta W_{n+1})^2}{4(1-\beta \Dt)^2}+\frac{\alpha \Dt}{1-\beta \Dt}},
\end{equation*}
where  $\alpha, \beta$ and $\gamma$ are defined in \eqref{eq:alphabetagamma}. Recall that $X(t_n)\approx (Y_n)^2$. This scheme has not been extended to the $\alpha\leq 0$ regime $(\sigma\geq 2\sqrt{\kappa\theta})$ and so we compute it only for $\alpha>0$. The second is the projection based method of \cite{CJM16} (denoted {\tt Projected}), which for CIR reduces to 
\begin{equation*}
    Y_{n+1} = \hat{Y_n} + \left(\frac{\alpha}{\hat{Y_n}}-\beta\hat{Y_n}\right)\Dt_{n+1} + \gamma \Delta W_{n+1},
\quad \hat{Y_n}:=\max(N^{-0.25},Y_{n}).
\end{equation*}

Although not covered by our convergence analysis, we fix the initial data $X_0=0$ in order to ensure the numerical methods must operate with values in the neighbourhood of zero. We fix parameters $\kappa=2$, $\theta=0.02$ and vary the parameter $\sigma$. In Table \ref{tab:sigmavals} we summarize some key values of $\sigma$ as the relate to the theory and as shown in the figures.
\begin{table}
\small{
\begin{center}
\begin{tabular}{ |c|c|c| } 
 \hline
 Description & $\sigma$ & Value  \\ 
 \hline \hline
 \makecell{
Projected Euler reaches rate $1/4$\\
Limit of theory for truncated Euler (rate $1/2$)}
& $\sigma=\sqrt{2\kappa\theta/3}$ & $\approx 0.1633$ \\\hline
 \makecell{
Limit of theory for Splitting (rate $1/4$, $p=1,2$)\\
Limit of theory for Drift Implicit (rate $1$, $p<2$) \\
Limit of theory for truncated Euler (rate $1/6$, $p=1$)}
& $\sigma=\sqrt{\kappa\theta}$ & $0.2$ \\  \hline 
Feller condition & $\sigma=\sqrt{2\kappa\theta}$ & $\approx 0.2828$ \\ \hline
 \makecell{
$\alpha\leq 0$. 
Adaptivity and Soft-Zero\\ required for splitting method}
& $\sigma\geq 2\sqrt{\kappa\theta}$ & $0.4$ \\ \hline
\end{tabular}
\end{center}
}
\caption{Key $\sigma$ values when $\kappa=2$, $\theta=0.02$. See Table \ref{tab:comparetheory} on comparisons of theoretical results.}
\label{tab:sigmavals} 
\end{table}

We base our comparisons on $M=1000$ realizations and include the uncertainty in estimated quantities based on $20$ batches of $50$ samples.
Reference solutions are computed with $\Dtref=10^{-5}$ and $\Xzero=\theta(1-e^{-\kappa\Dtref}) / \rho$ with $\rho=2$, and 
$$\Dtmax\in\{0.1,0.01,0.005,0.001, 0.0005, 0.0001\}. $$
To compute rates of convergence we fit a linear polynomial to the data.

In Fig. \ref{fig:samplepaths} we compare for $\sigma=0.8$ sample paths from the four schemes proposed for this regime (so do not include {\tt Implicit} as $\alpha<0$).
 The left column $(a)$ and $(c)$ has $\Dtmax=10^{-5}$ for our splitting/adaptive method. In (a) the fixed step methods were all taken with $\Dt=\Dtmax$. 
 In the right column $(b)$ and $(d)$ has $\Dtmax=0.01$ for our splitting/adaptive method; the fixed step methods
 $$\overline{\Dt}=\frac1N\sum_{n=0}^{N-1} \Dt_{n+1} \approx 0.004$$
 from the splitting/adaptive method. 
 In the timestep plots $(c)$ and $(d)$ we indicate with a circle which steps were taken using the deterministic step. We also show in the upper horizontal line where $\Dt_{n+1}=0.95\frac{\Xzero}{2|\alpha|}$ (from  \eqref{eq:adaptdt}) and also where $\Dt_{n+1}=\Xzero$ (lower horizontal line).

The error constant for the projected method of \cite{CJM16} is consistently largest for all realizations and $\sigma$ values, this is evident from the paths in both (a) and (b).
 In (a) our method may take smaller steps than the fixed step methods, whereas in (b) using the average stepsize for the fixed step methods means we have a larger error where, for example, $\Dt_n=\Dtmax$ (e.g. for $t\in[0.18,0.425]$).
 
Comparing the left columns ($(a)$ and $(c)$) we see that the adaptivity occurs where the method is close to zero and where the solution is large the maximum time step can be used. Small steps may arise (e.g. $10^{-9}$) where the solution just enters the {\em soft zero} region.
We observe in $(b)$ (e.g. around $t=0.75$) that the fully truncated scheme is liable to miss dynamics in the region of zero,
though this does not seem to be an issue for the Milstein method in this realization.
\begin{figure}
\centering
\begin{subfigure}[t]{0.49\textwidth}
\includegraphics[width=\textwidth]{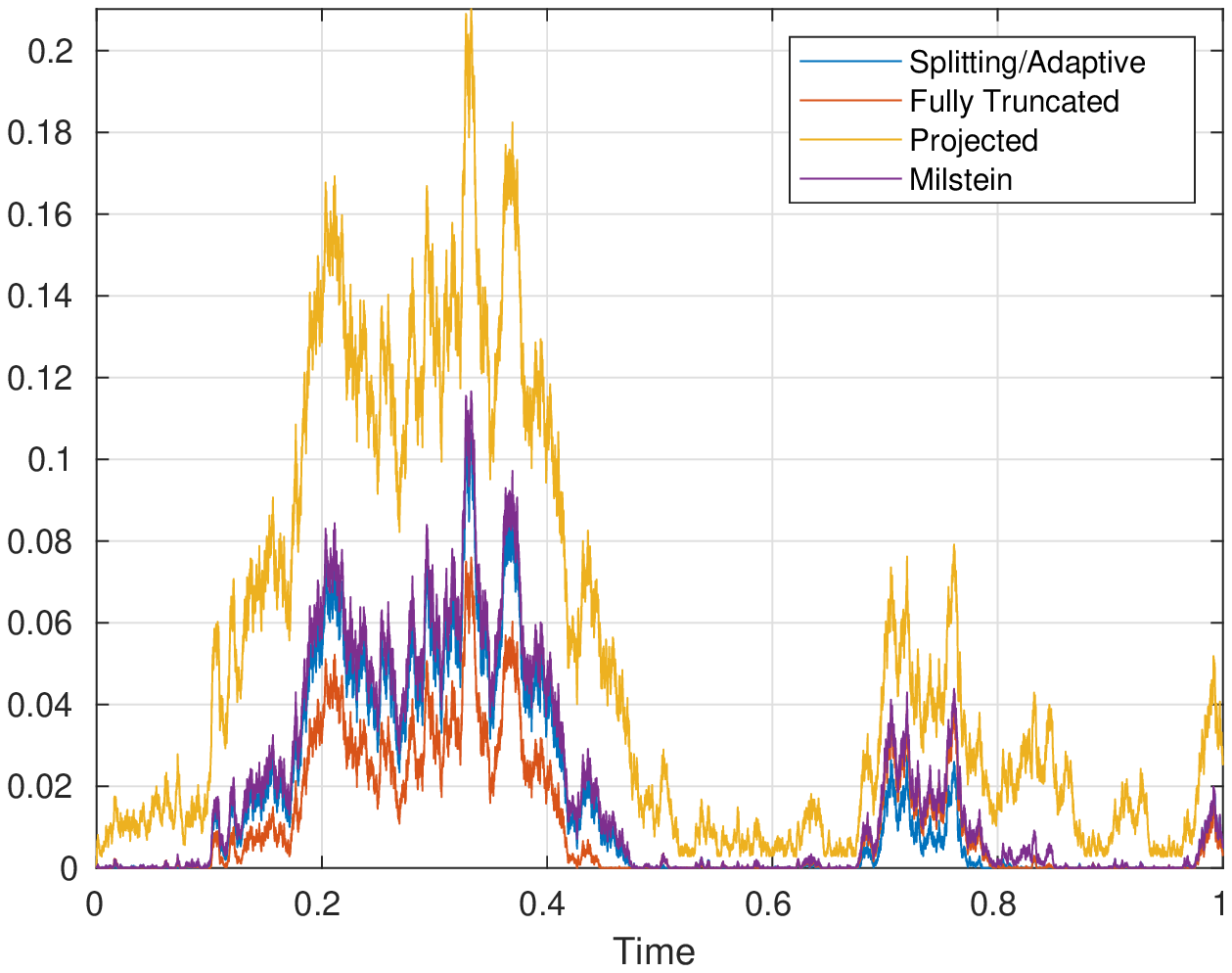}
\caption{$\Dtmax=10^{-5}$}
\end{subfigure}
\begin{subfigure}[t]{0.49\textwidth}
\includegraphics[width=\textwidth]{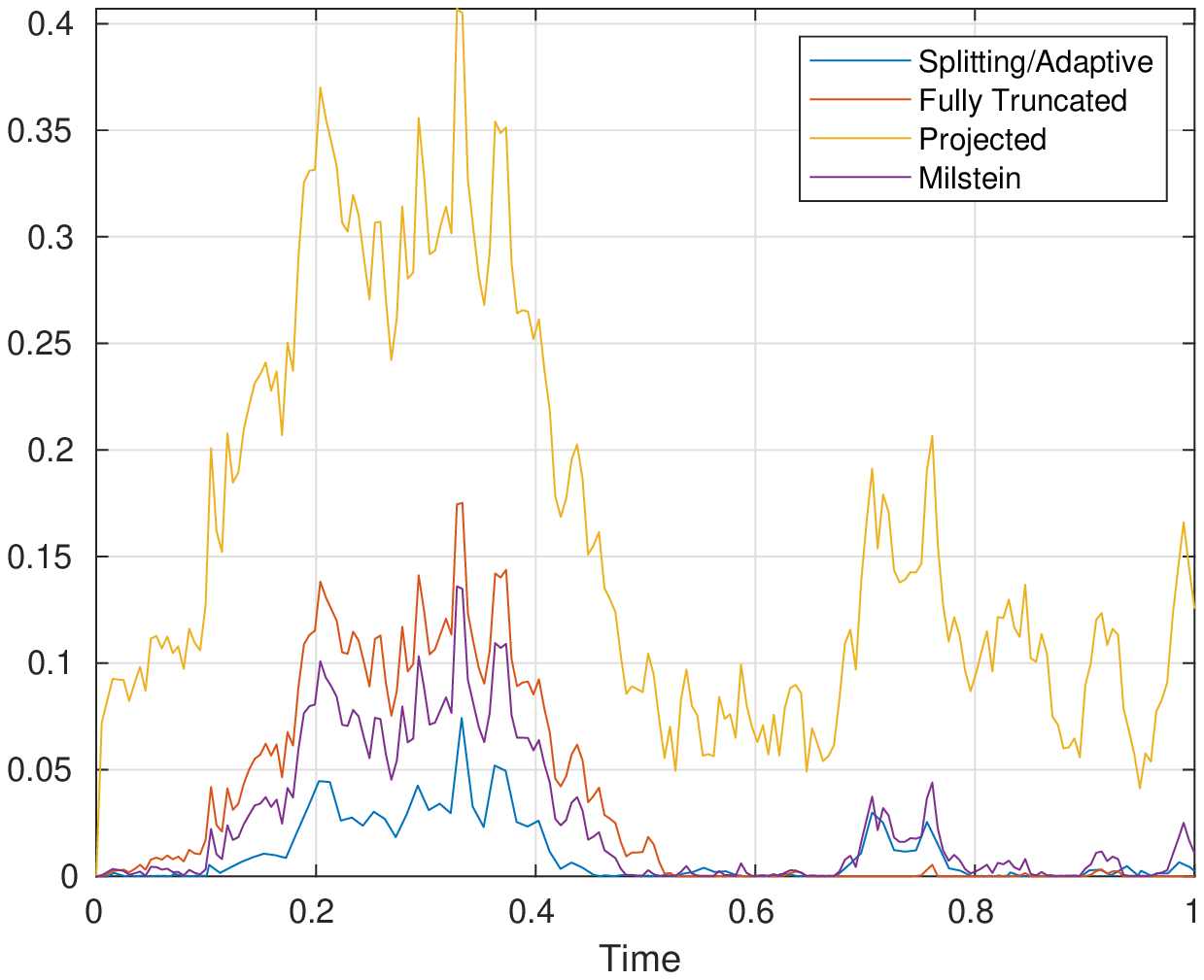}
\caption{$\Dtmax=0.01$}
\end{subfigure}
\begin{subfigure}[b]{0.49\textwidth}
\includegraphics[width=\textwidth]{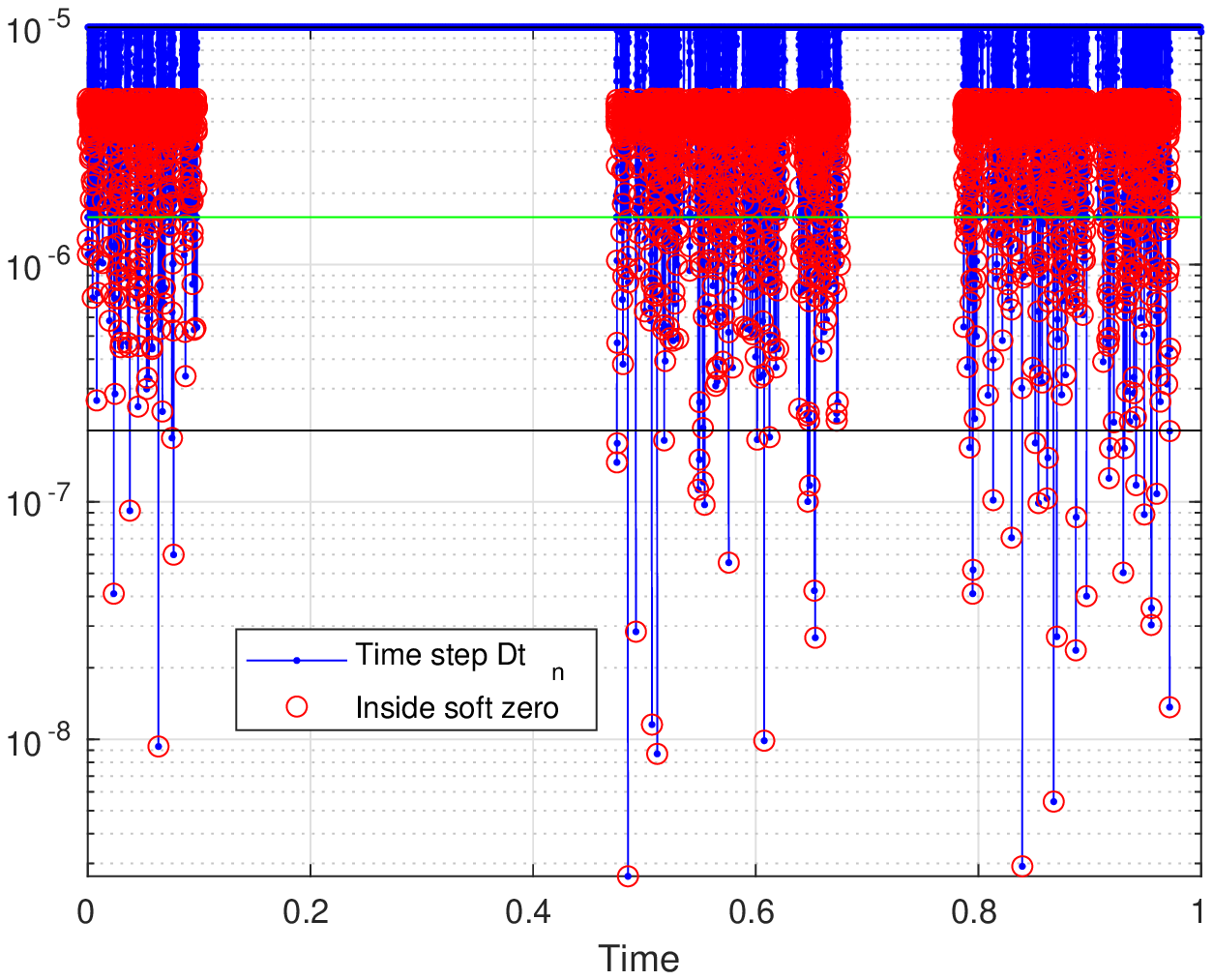}
\caption{$\Dtmax=10^{-5}$}
\end{subfigure}
\begin{subfigure}[b]{0.49\textwidth}
\includegraphics[width=\textwidth]{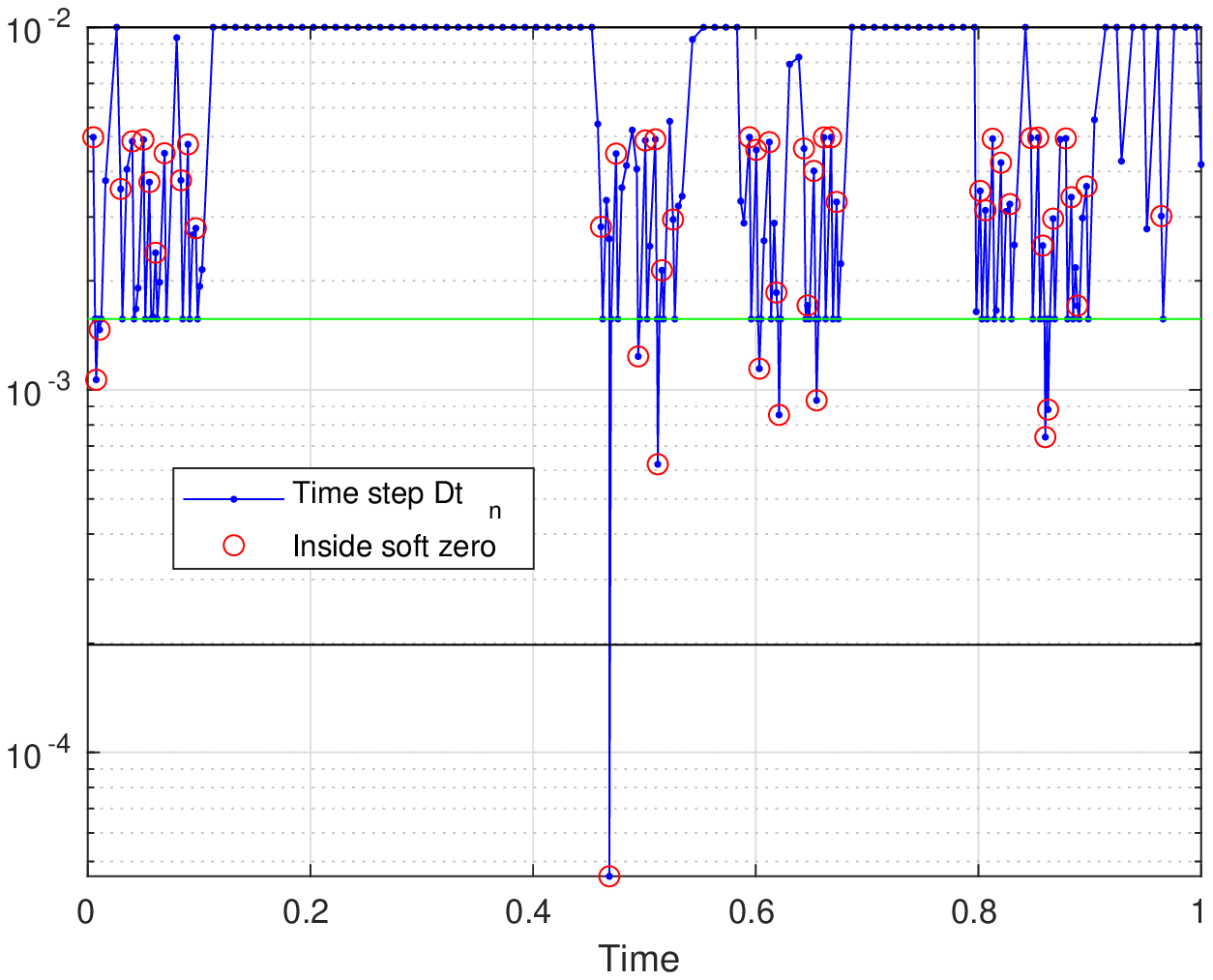}
\caption{$\Dtmax=0.01$}
\end{subfigure}
\caption{Sample paths and adaptive steps taken for $\sigma=0.8$ with $\Dtmax=\Dtref=10^{-5}$ in (a), (c) and $\Dtmax=0.01$ in (b) and (d). On the time steps in (c) and (d) the circles represent steps taken with the deterministic ODE.
We also indicate for reference where $\Dt=\Xzero$ (lower horizontal line) and where \eqref{eq:adaptdt}
holds with $\Dt_{n+1}=\Xzero$.}
\label{fig:samplepaths}
\end{figure}

In Fig.~\ref{fig:rateM1000} we plot the rate of convergence for all the schemes 
as $\sigma$ increases based on $M=1000$ samples. We see the observed rates of convergence are for small $\sigma$ often better than the theoretical rates and furthermore the observed $L_1$ and $L_2$ rates follow closely those predicted in \cite{HefterHerzwurm2018,HefterJentzen2019,HHMG2019}. 
Also illustrated on both (a) and (b) are the positions of key $\sigma$ values from Table \ref{tab:sigmavals}, see also Table \ref{tab:comparetheory} for predicted theoretical rates.
We note from Fig.~\ref{fig:rateM1000} that in the parameter regime $\sigma\in(0,0.2)$ we observe rate $1$ rather than the predicted theoretical rate of $1/4$.
Where $\alpha=0$ (at $\sigma=0.4$) the  splitting/adaptive scheme as well as {\tt Fully Truncated} and {\tt Implicit} appear to have rate $\approx 1/2$, where as {\tt Projected}  has a lower rate. Then for $\alpha<0$ {\tt Projected} seems to have a higher rate than the others (e.g. at $\sigma=0.6,0.8$).
However Fig.~\ref{fig:rateM1000} only considers the rate of convergence, and not the size of the error constant. 

\begin{figure}
\centering
\begin{subfigure}[b]{0.49\textwidth}
\includegraphics[width=\textwidth]{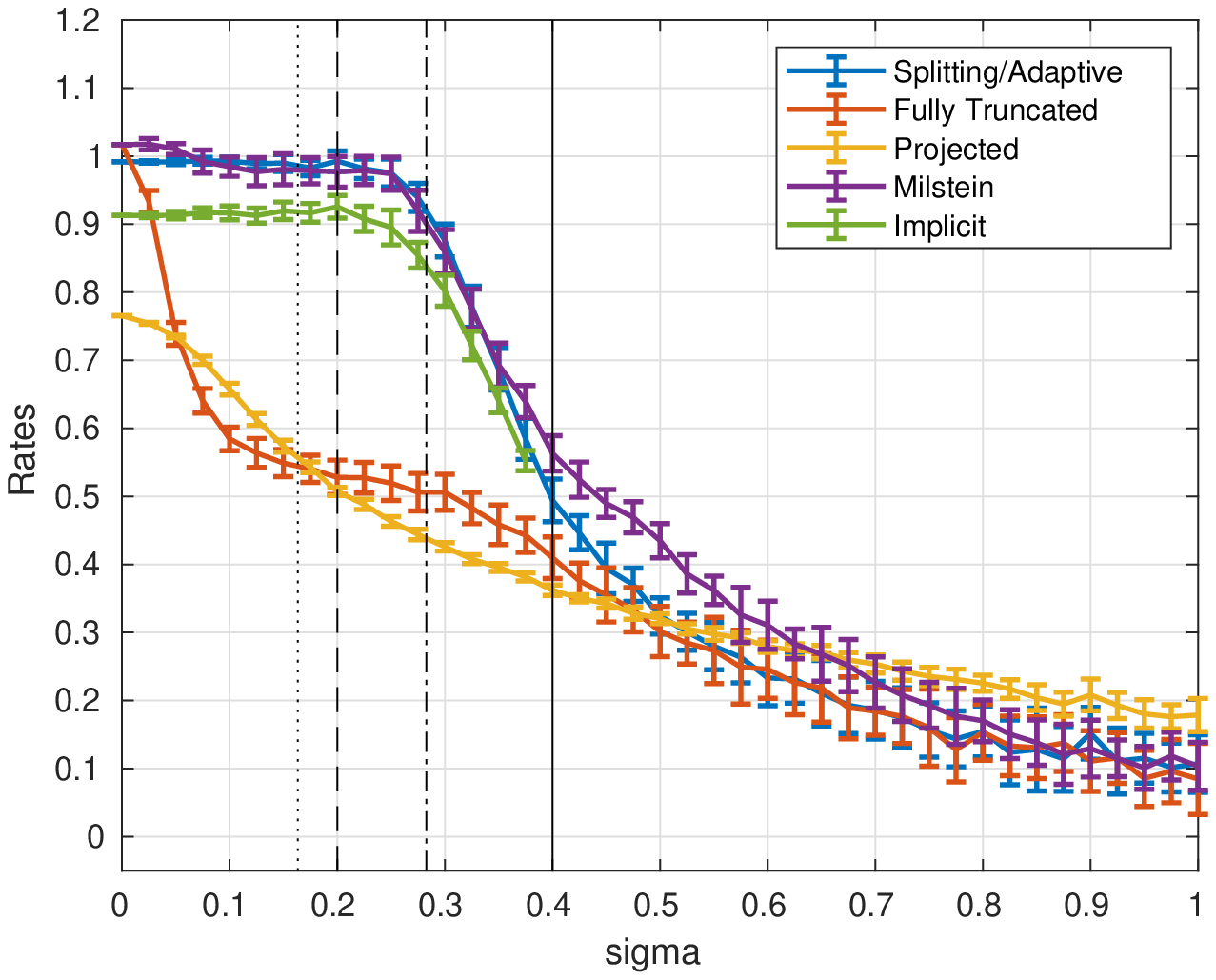}
\caption{$L_1$ error}
\end{subfigure}
\begin{subfigure}[b]{0.49\textwidth}
\includegraphics[width=\textwidth]{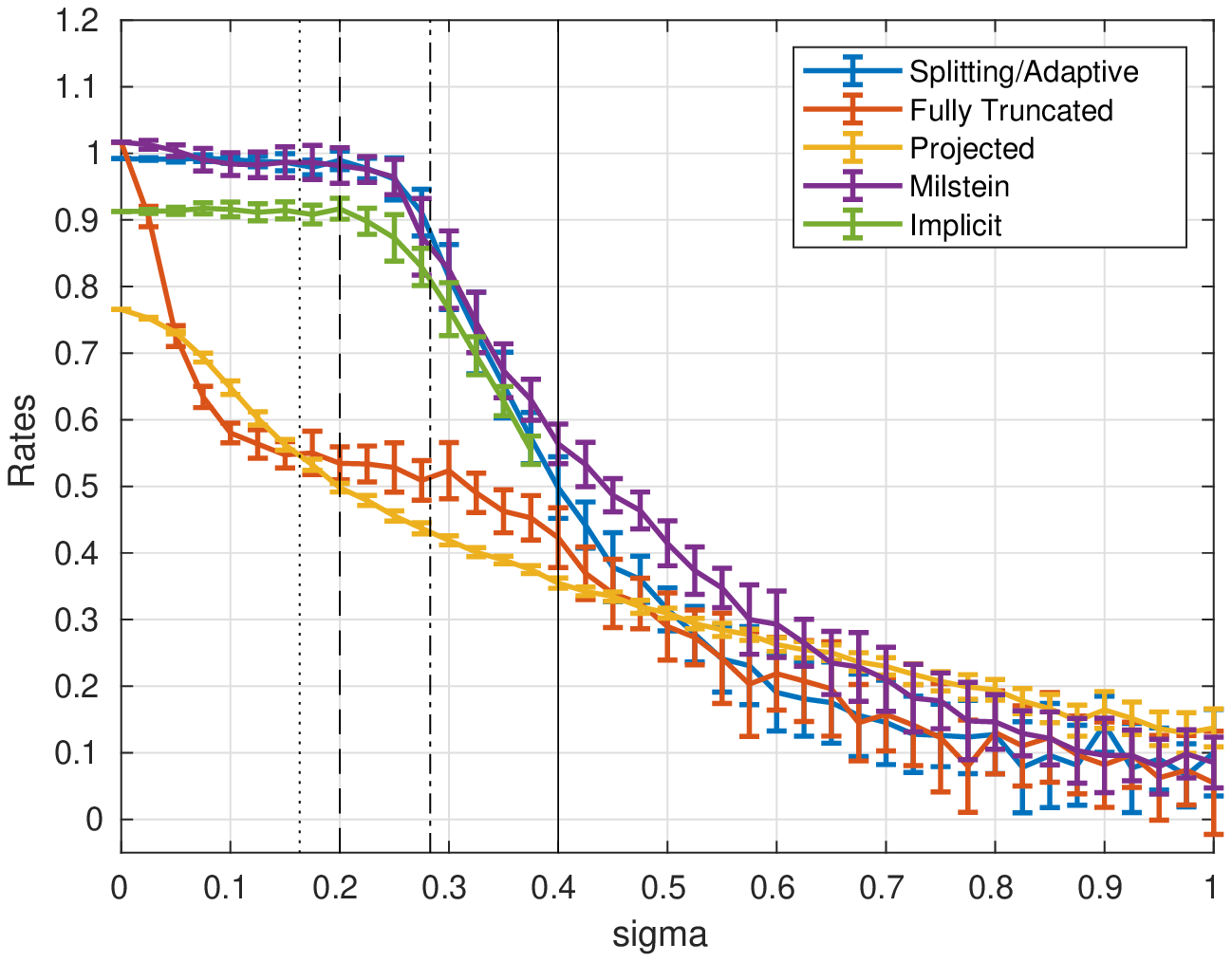}
\caption{$L_2$ error}
\end{subfigure}
\caption{Comparison of rate of convergence for our method against {\tt Implicit}, {\tt Fully Truncated}, {\tt Projected} and {\tt Milstein} with $1000$ samples, taking $20$ groups of $50$ realizations to estimate the standard deviation in the error bars. In (a) $L_1$ error and (b) $L_2$ error.
}
\label{fig:rateM1000}
\end{figure}
In Fig. \ref{fig:compareconv} we 
present the convergence plots for the $L_1$ and $L_2$ errors at $\sigma=0.3$ (such plots are the basis of the rates presented in  Fig. \ref{fig:rateM1000}). 
We see that {\tt Projected} has the largest error constant and this remains true for larger  values of sigma.  Although {\tt Projected} shows a good rate of convergence for large $\sigma$ in Fig. \ref{fig:rateM1000}, the error constant is the largest of the methods. 
In Fig \ref{fig:compareconv} we also include another adaptive version of the splitting method, this time denoted {\tt Adaptive Splitting}. 
Since for $\sigma=0.3$ we have $\alpha>0$ there is no need for adaptivity (no need for \emph{soft-zero}) for the scheme to be well defined, but we do observe an improvement in performance as measured by the size of the error constant if adaptivity is used. To examine this effect we take the heuristic choice 
$$\Dt_{n+1}=\Dtmax/(1+3\exp(-150 X_n)),$$
then for small $X_n$ the time step is smaller and $\Dt_{n+1}$ asymptotes to $\Dtmax$ as $X_n$ approaches the mean value $\theta$. 
We observe this adaptive method has the best error constant when compared to the fixed step methods we examine here. However, we have not examined adaptivity for the other schemes and it is an open question what an optimal timestepping strategy might be for a given $\sigma$.

\begin{figure}
\centering
\begin{subfigure}[b]{0.49\textwidth}
\includegraphics[width=\textwidth]{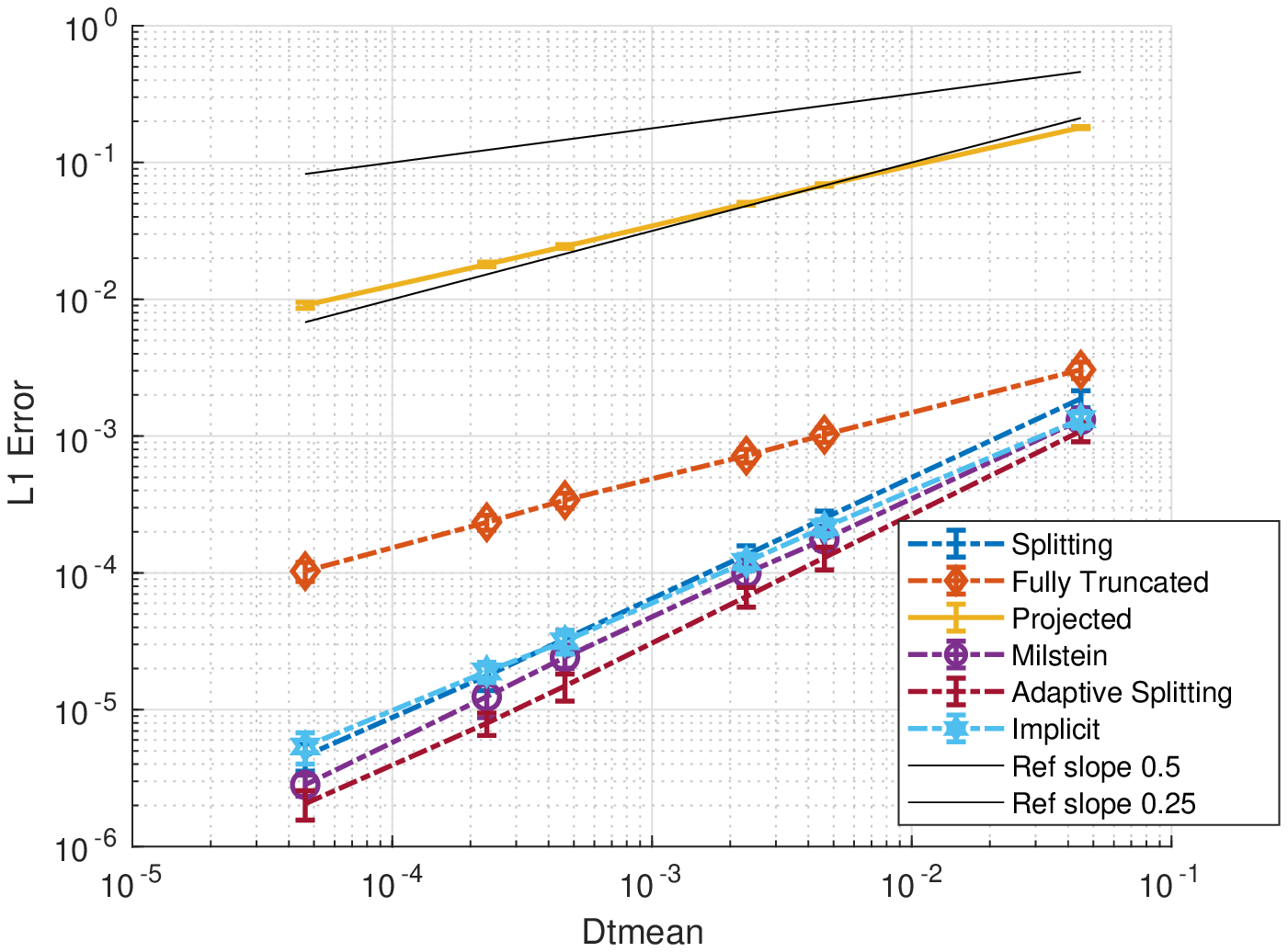}
\caption{}
\end{subfigure}
\begin{subfigure}[b]{0.49\textwidth}
\includegraphics[width=\textwidth]{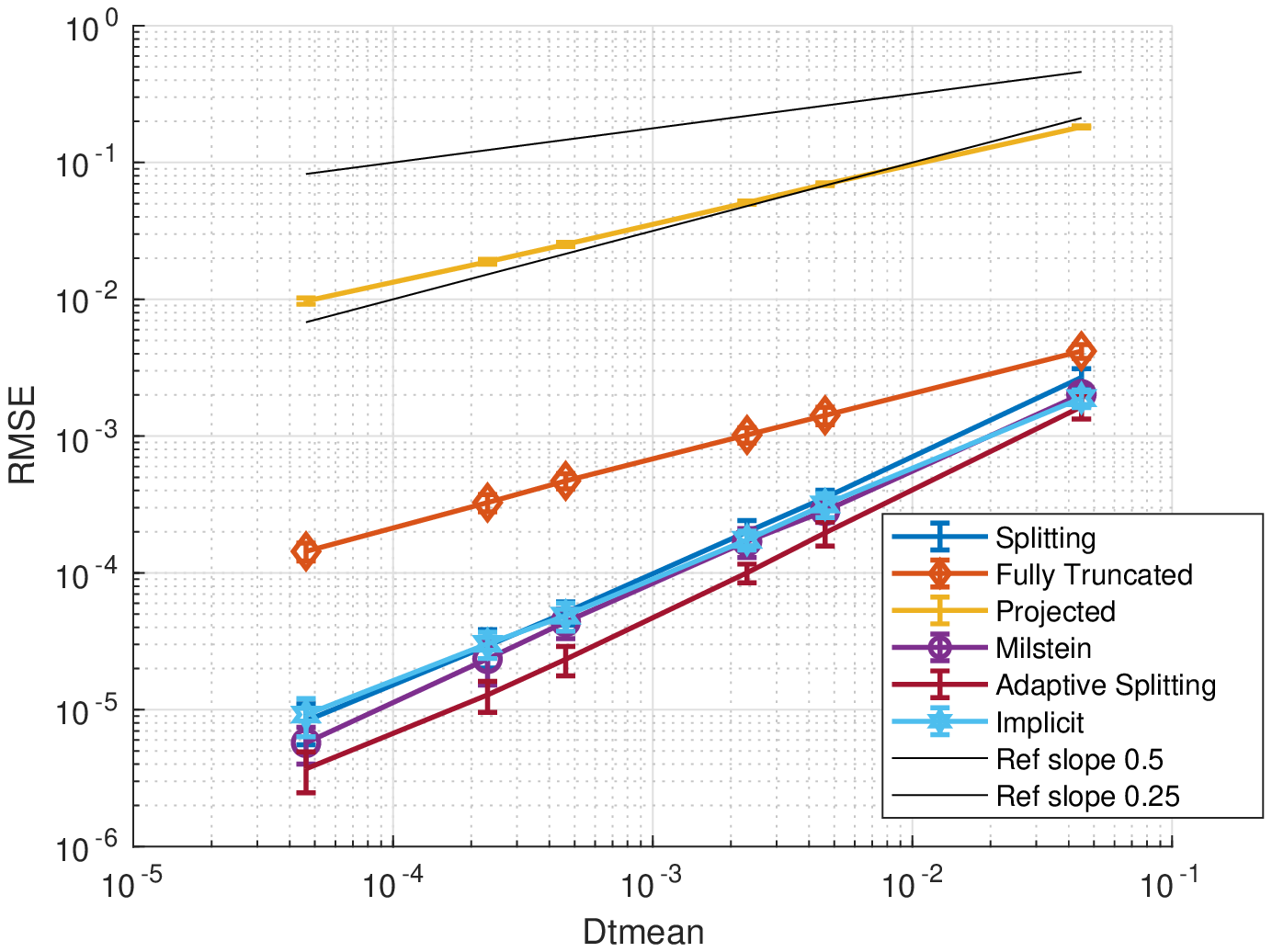}
\caption{}
\end{subfigure}
\caption{Comparison of convergence in $L_1$ (a) and $L_2$ (b) for $\sigma=0.3$.
{\tt Adaptive splitting} yields an improvement in the error constant.}
\label{fig:compareconv}
\end{figure}

\section*{Acknowledgement}
The authors wish to thank the anonymous referees for their careful reading of the manuscript and helpful suggestions.

\bibliographystyle{plain}
\bibliography{CIRb}
\end{document}